\documentclass[11pt]{article}
\usepackage{amsmath,amsthm,amsfonts,amssymb,bm,wasysym}
\usepackage{subfigure}
\usepackage{graphicx}
\usepackage[usenames]{color}
\usepackage{verbatim}



\topmargin 0in
\oddsidemargin .01in
\textwidth 6.5in
\textheight 9in
\evensidemargin 1in
\addtolength{\voffset}{-.6in}
\addtolength{\textheight}{0.22in}
\parskip \medskipamount
\parindent	0pt


\newtheorem{theorem}{Theorem}[section]
\newtheorem{definition}[theorem]{Definition}

\numberwithin{equation}{section}
\newtheorem{lemma}[theorem]{Lemma}
\newtheorem{proposition}[theorem]{Proposition}
\newtheorem{remark}[theorem]{Remark}

\newtheorem{claim}[theorem]{Claim}

\newtheorem{conjecture}[theorem]{Conjecture}
\numberwithin{equation}{section}

\def\N{\mathbb{N}}
\def\Z{\mathbb{Z}}

\def\R{\mathbb{R}}

\def\S{\mathbb{S}}

\def\F{\mathcal{F}}

\def\B{\mathcal{B}}

\def\tr{{\rm{tr}}}
\renewcommand{\phi}{\varphi}
\renewcommand{\epsilon}{\varepsilon}

\allowdisplaybreaks

\newcommand{\1}{{\text{\Large $\mathfrak 1$}}}

\newcommand{\cov}{\operatorname{Cov}}

\newcommand{\til}{\widetilde}

\newcommand{\pr}[1]{\mathbb{P}\!\left(#1\right)}
\newcommand{\E}[1]{\mathbb{E}\!\left[#1\right]}

\newcommand{\prcond}[3]{\mathbb{P}_{#3}\!\left(#1\;\middle\vert\;#2\right)}
\newcommand{\econd}[2]{\mathbb{E}\!\left[#1\;\middle\vert\;#2\right]}

\newcommand{\norm}[1]{\left\| #1 \right\|}
\newcommand{\fract}[2]{{\textstyle\frac{#1}{#2}}}
\newcommand{\tn}{|\kern-.1em|\kern-0.1em|}

\newcommand\be{\begin{equation}}
\newcommand\ee{\end{equation}}

\def\eps{\varepsilon}

\begin{document}

\title{\bf Self-interacting random walks}

\author{
Yuval Peres\thanks{Microsoft Research, Redmond, Washington, USA; peres@microsoft.com} 
\and Serguei Popov\thanks{University of Campinas, Campinas SP, Brazil; popov@ime.unicamp.br}
\and Perla Sousi\thanks{University of Cambridge, Cambridge, UK;   p.sousi@statslab.cam.ac.uk}
}
\maketitle
\thispagestyle{empty}

\begin{abstract} 

Let $\mu_1,\ldots, \mu_k$ be $d$-dimensional probability measures in $\R^d$ with mean $0$. At each step we choose one of the measures based on the history of the process and take a step according to that measure. 
We give conditions for transience of such processes and also construct examples of recurrent processes of this type.
In particular, in dimension~$3$ we give the complete picture: every walk generated by two measures is transient and there exists a recurrent walk generated by three measures.
\newline
\newline
\emph{Keywords and phrases.} Transience, recurrence, Lyapunov function.
\newline
MSC 2010 \emph{subject classifications.}
Primary 60G50; 
Secondary 60J10. 
\end{abstract}

\section{Introduction}

Let $\mu_1$ and $\mu_2$ be two zero mean measures in $\R^4$ 
with finite supports that span the whole space. On the first visit to a site the jump of the process has law $\mu_1$ and at further visits it has law $\mu_2$.
The following question was posed in~\cite{BenKozScha}:  Is the resulting walk transient? 

More generally, one can consider any adapted rule (i.e., a rule depending on the history of the process) for choosing between $\mu_1$ and $\mu_2$, and ask
the same question. It turns out that the answer to this question is positive, even in~$\R^3$, as proved in Theorem~\ref{thm:ddimd-1meas} below.
Moreover, in $3$ dimensions this result is sharp, in the sense that one can construct an example of a recurrent walk with three measures, as shown in Theorem~\ref{thm:ddimrecurrence}.

This naturally fits into the wider context of random walks that are not Markovian, namely where the next step the walk takes also depends on the past.
Recently there has been a lot of interest in random walks of this kind.
A large class of such walks are the so-called vertex (or edge) reinforced random walks, where the walker chooses the next vertex to jump to with weight proportional to the number of visits to that vertex up to that time; see e.g.~\cite{Benaim, MerkRolSil, PemVolkov, RaimSchap, Volkov}.
Another class of such walks is the so-called excited random walks, when 
the transition probabilities depend on whether it is the first visit to a site or not, see e.g.~\cite{BenWilson, BerRam, MenPopRamVach, HofHol, Zerner}.

In this paper we study transience and recurrence for walks in dimensions $3$ and above that are generated by a finite collection of step distributions. We now give the precise definition of the walks we will be considering. 

\begin{definition}\label{def:randomwalk}\rm{
Let $\mu_1,\ldots,\mu_k$ be $k$ probability measures in $\R^d$ and for each $j=1,\ldots,k$, let $\xi^j_1,\xi^j_2,\ldots$
be i.i.d.\ with law $\mu_j$. Define an \textit{adapted rule} $\ell=(\ell(i))_i$ with respect to a filtration $(\F_i)$ to be a process such that $\ell(i) \in \{1,\ldots,k\}$ and is $\F_i$ measurable for all $i$.
We will say that the  walk~$X$, with $X_0=0$, is generated by the measures $\mu_1,\ldots, \mu_k$ and the rule $\ell$ if 
\[
X_{i+1} = X_i + \xi^{\ell(i)}_{i+1}.
\]
}
\end{definition}

We say that a measure $\mu$ in $\R^d$ has mean $0$ if $\int_{\R^d} x \mu(dx) =0$. Also we write that a measure $\mu$ has $\beta$ moments, if 
$\E{\norm{Z}^{\beta}}<\infty$, where $Z \sim \mu$.
We define the covariance matrix of $\mu$ as follows: $\cov(\mu) = \left(\E{Z_i Z_j}\right)_{i,j=1}^{d}$.

Note that if $\mu$ is a measure in $\R^d$, then it has an invertible covariance matrix if and only if its support contains $d$ linearly independent vectors of $\R^d$. We will call such measures $d$-dimensional.

In this paper we are mainly interested in the following two questions: 
\begin{itemize}
\item Let $\mu_1,\ldots,\mu_k$ be mean $0$ probability measures in $\R^d$.
What are the conditions on the measures so that for every adapted rule $\ell$ the resulting walk is transient?

\item For a given dimension $d$, how do we construct examples of recurrent walks generated by $k$ $d$-dimensional mean $0$ measures? How small can this number $k$ be made?
\end{itemize}

In Section~\ref{sec:transcond} we state our results concerning the first question and in Section~\ref{sec:recurrenceintro} about the second one. Observe that Theorems~\ref{thm:ddimd-1meas} and~\ref{thm:ddimrecurrence} give a complete picture in dimension $3$: any two mean $0$ measures with $2+\beta$ moments, for some $\beta>0$, always generate a transient walk, while there is an example of a recurrent walk generated by three $3$-dimensional measures of mean $0$ with a suitable adapted rule.


\subsection{Conditions for transience}\label{sec:transcond}

\begin{theorem}\label{thm:ddimd-1meas}
Let $\mu_1,\mu_{2}$ be $d$-dimensional measures in $\R^d$, $d\geq 3$, with zero mean and $2+\beta$ moments, for some $\beta>0$. If $X$ is a random walk generated by these measures and an arbitrary adapted rule $\ell$, then $X$ is transient.
\end{theorem}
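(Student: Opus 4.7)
The plan is to construct a Lyapunov function of the form $f(x) = (x^\top A x)^{-\alpha/2}$ for a suitably chosen positive definite matrix $A$ and exponent $\alpha > 0$, and to show that $f(X_n)$ is (approximately) a nonnegative supermartingale whenever $\norm{X_n}$ is large, regardless of which measure the adapted rule selects. Transience will then follow from a standard optional stopping argument.

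Using the mean-zero and $(2+\beta)$-moment hypotheses, a Taylor expansion of $f$ around $X_n$ gives, with $V_n = X_n^\top A X_n$,
\[
\E{f(X_{n+1}) - f(X_n) \mid \F_n} = \frac{\alpha}{2\,V_n^{\alpha/2+1}}\Bigl[(\alpha+2)\,\frac{X_n^\top A\,\Sigma_{\ell(n)} A X_n}{V_n} - \tr(A \Sigma_{\ell(n)})\Bigr] + O\bigl(\norm{X_n}^{-\alpha-2-\beta}\bigr).
\]
Setting $M_j := A^{1/2}\Sigma_j A^{1/2}$, the Rayleigh quotient in the bracket is bounded above by $\lambda_{\max}(M_j)$ and $\tr(A\Sigma_j) = \tr(M_j)$, so the bracket is at most $(\alpha+2)\lambda_{\max}(M_j) - \tr(M_j)$. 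Making this negative uniformly in $j \in \{1,2\}$ yields the supermartingale property outside a large ball, since the leading $V_n^{-\alpha/2-1}$ term then dominates the $O(\norm{X_n}^{-\alpha-2-\beta})$ error.

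The heart of the proof is therefore the linear-algebra fact that such $A$ and $\alpha$ always exist. I would prove it by simultaneously diagonalizing $(\Sigma_1, \Sigma_2)$: after a linear change of coordinates one may assume $\Sigma_1 = I$ and $\Sigma_2 = \mathrm{diag}(d_1, \ldots, d_d)$, and look for $A = \mathrm{diag}(a_1,\ldots,a_d)$ diagonal; the two conditions then reduce to $\max_i a_i < \tfrac12\sum_i a_i$ and $\max_i(a_i d_i) < \tfrac12\sum_i(a_i d_i)$. If no single $d_i$ exceeds $\tfrac12\sum_i d_i$, the uniform choice $a_i \equiv 1$ already works, and the first inequality uses exactly $d \geq 3$. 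Otherwise, letting $i^\ast$ be an index with $d_{i^\ast} = \max_i d_i$ and setting $a_{i^\ast} = \delta$, $a_i = 1$ for $i \neq i^\ast$, a short calculation shows that both inequalities become strict for $\delta$ in an explicit interval $(\delta_{\min}, \delta_{\max})$. The nonemptiness of this interval boils down to the sum of the $d-2$ smallest $d_i$'s being positive, which is precisely where $d \geq 3$ enters (and correctly fails for $d = 2$). This lemma is the main obstacle, though it is entirely elementary.

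With the supermartingale property established, transience follows by a classical argument: set $\tau = \inf\{n\geq 0 : \norm{X_n}\leq R\}$ for a large enough $R$, apply optional stopping to $f(X_{n\wedge\tau})$ to obtain $\pr{\tau<\infty\mid X_0}\leq f(X_0)/\inf_{\norm{y}\leq R} f(y)$, which tends to $0$ as $\norm{X_0}\to\infty$, and iterate over hitting times of successive annuli to conclude $\norm{X_n}\to\infty$ almost surely. One minor technical point in implementing the Taylor analysis is to separately handle steps with $\norm{Z}$ comparable to $\norm{X_n}$, which is controlled by the $2+\beta$ moment hypothesis via truncation.
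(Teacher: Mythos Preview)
Your proposal is correct and follows essentially the same strategy as the paper: a Lyapunov function of the form $\|Bx\|^{-\alpha}$ (equivalently your $(x^\top A x)^{-\alpha/2}$ with $A=B^\top B$), a Taylor expansion yielding the trace condition $\tr(M_j)>2\lambda_{\max}(M_j)$ for $M_j=B\Sigma_j B^\top$, and the same linear-algebra construction via simultaneous diagonalization of $(\Sigma_1,\Sigma_2)$ followed by a one-coordinate rescaling when the largest eigenvalue dominates. The only cosmetic difference is that the paper first projects onto three coordinates and proves the matrix lemma in dimension~$3$ (its Proposition~2.4), while you carry out the same construction directly in $\R^d$; both routes are equally valid, and your concluding supermartingale/optional-stopping argument is a standard variant of the paper's supermartingale-convergence argument.
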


The following result will be used in the proof of Theorem~\ref{thm:ddimd-1meas} but is also of independent interest, since it gives a sufficient condition on the covariance matrices of the measures used in order to generate a transient random walk $X$ for an arbitrary adapted rule $\ell$. 

For a matrix $A$ we write $A^T$ for its transpose, $\lambda_{\max}(A)$ for its maximum eigenvalue and $\tr(A)$ for its trace.

\begin{theorem}\label{thm:kmeasddim}
Let $\mu_1,\ldots, \mu_k$ be mean $0$ measures in $\R^d$, $d\geq 3$, with $2+\beta$ moments, for some $\beta>0$. Suppose that there exists a matrix $A$ such that for all $i$ we have
\begin{align}\label{eq:tracecondition}
\tr(A M_i A^T) > 2 \lambda_{\max}(A M_i A^T),
\end{align}
where $M_i$ is the covariance matrix of the measure $\mu_i$. If $X$ is a random walk generated by these measures and an arbitrary adapted rule $\ell$, then $X$ is transient.
\end{theorem}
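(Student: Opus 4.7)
The plan is a Lyapunov-function argument. Passing to the linearly transformed walk $Y_n := AX_n$, whose increments at step $n$ are conditionally mean zero, with covariance $\Sigma_n := AM_{\ell(n)}A^T$ and $2+\beta$ moments (inherited from the $\mu_i$), it suffices to show $\norm{Y_n}\to\infty$ almost surely; transience of $X$ follows because \eqref{eq:tracecondition} forces $A$ to be invertible in every non-degenerate regime (indeed, the hypothesis is incompatible with $\mathrm{rank}(AM_iA^T)\le 2$). By \eqref{eq:tracecondition} and the finiteness of $k$, fix $\alpha>0$ small enough that
\begin{equation*}
(\alpha+2)\,\lambda_{\max}(AM_iA^T) < \tr(AM_iA^T) \qquad\text{for every } i=1,\dots,k,
\end{equation*}
and take as Lyapunov function $f(y) := \norm{y}^{-\alpha}$ on $\R^d\setminus\{0\}$. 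Without loss of generality I assume $\beta\in(0,1]$.

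A direct computation of the Hessian yields
\begin{equation*}
\tr\bigl(\Sigma\nabla^2 f(y)\bigr) = \alpha\,\norm{y}^{-\alpha-2}\!\left((\alpha+2)\,\frac{\langle y,\Sigma y\rangle}{\norm{y}^2} - \tr(\Sigma)\right),
\end{equation*}
which for $\Sigma\in\{AM_iA^T\}_i$ is bounded above by $-c\,\norm{y}^{-\alpha-2}$ with $c>0$ uniform in $i$, since $\langle y,\Sigma y\rangle/\norm{y}^2\le\lambda_{\max}(\Sigma)$. I then Taylor-expand $f(y+\xi)-f(y)$ to second order in the step $\xi$; taking conditional expectation, the linear term vanishes (mean zero) and the quadratic term equals $\tfrac{1}{2}\tr(\Sigma_n\nabla^2 f(y))$, so the remaining task is to estimate the Taylor remainder using only the $2+\beta$-moment hypothesis. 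I split on $\{\norm{\xi}\le\norm{y}/2\}$, where the uniform bound $|\nabla^3 f|\lesssim\norm{y}^{-\alpha-3}$ and the interpolation $\norm{\xi}^3\le\norm{\xi}^{2+\beta}(\norm{y}/2)^{1-\beta}$ yield an expected contribution of order $\norm{y}^{-\alpha-2-\beta}$; and on the complement $\{\norm{\xi}>\norm{y}/2\}$, where Markov's inequality applied to $\norm{\xi}^{2+\beta}$, together with a Hölder-type bound on $\E{\norm{y+\xi}^{-\alpha}}$ handling the singularity of $f$ at the origin (for $\alpha$ small), again produces a term of strictly smaller order than $\norm{y}^{-\alpha-2}$.

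Consequently, for some $R>0$,
\begin{equation*}
\E{f(Y_{n+1})\mid\F_n} - f(Y_n) \;\le\; -\tfrac{c}{2}\,\norm{Y_n}^{-\alpha-2}\;\le\;0 \quad\text{on } \{\norm{Y_n}\ge R\}.
\end{equation*}
Stopping $f(Y)$ at the first return to the ball $B_R := \{\norm{y}\le R\}$ and applying optional stopping yields $\pr{\text{ever return to }B_R\mid Y_0=y}\le(R/\norm{y})^{\alpha}$ for $\norm{y}>R$. Because the class of adapted-rule walks in Definition~\ref{def:randomwalk} is closed under time-shifts, this bound applies conditionally on $\F_n$ at every time, so iterating against successive passages through $B_R$ (together with a Doob-type lower bound forcing the walk out of any bounded set in a bounded number of steps, from the nondegeneracy implicit in \eqref{eq:tracecondition}) gives $\norm{Y_n}\to\infty$ almost surely. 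The step I expect to be the main obstacle is the remainder estimate for the singular Lyapunov function $\norm{y}^{-\alpha}$ under merely a $2+\beta$-moment step distribution---particularly the contribution on $\{\norm{\xi}>\norm{y}/2\}$, where $\norm{y+\xi}^{-\alpha}$ must be controlled without any smoothness of $\mu_i$, which is what forces $\alpha$ to be taken small. A secondary subtlety is that $X$ is non-Markovian, so iterating the escape-probability estimate relies on shift-invariance of the adapted-rule class rather than the strong Markov property.
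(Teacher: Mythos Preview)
Your approach is essentially the paper's: pass to $Y_n = AX_n$, take the Lyapunov function $\|y\|^{-\alpha}$ for small $\alpha$, Taylor-expand to second order with the truncation $\{\|\xi\| \le \|y\|/2\}$, and identify the leading (negative) term as $\tfrac12\tr(\Sigma\,\nabla^2 f(y))$. This is exactly Lemma~\ref{lem:taylorexpansion}.

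The gap is the step you yourself flag as the main obstacle. Your proposed ``H\"older-type bound on $\E{\|y+\xi\|^{-\alpha}}$'' does not work in general: the $\mu_i$ may have atoms, so for certain $y$ one has $\pr{y+\xi = 0} > 0$ and $\E{\|y+\xi\|^{-\alpha}} = +\infty$; no choice of small $\alpha>0$ rescues this, and then neither the Taylor estimate nor the optional-stopping step is valid. The paper's remedy is simply to \emph{cap} the function, taking $\phi(y) = \|y\|^{-\alpha} \wedge 1$ in Lemma~\ref{lem:taylorexpansion} and $\tilde\phi(y) = \|y\|^{-\alpha} \wedge r_0^{-\alpha}$ in the proof of the theorem. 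On $\{\|\xi\| > \|y\|/2\}$ one then bounds $|\phi(y+\xi) - \phi(y+\tilde\xi)| \le 1$ and multiplies by $\pr{\|\xi\| > \|y\|/2} = O(\|y\|^{-2-\beta})$, which is $o(\|y\|^{-\alpha-2})$ once $\alpha < \beta$; the singularity never enters. Capping has a second payoff: $\tilde\phi(Y_n)$ becomes a \emph{global} bounded nonnegative supermartingale, hence converges a.s., and together with the elementary fact $\limsup_n \|Y_n\| = \infty$ (Lemma~\ref{lem:supermg}) this gives $\|Y_n\| \to \infty$ directly, bypassing your optional-stopping iteration and the appeal to shift-invariance. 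Finally, your claim that \eqref{eq:tracecondition} forces $A$ to be invertible is false for $d > 3$ (e.g.\ $M_i = I$ and $A$ of rank $3$ with $AA^T$ having eigenvalues $1,1,1,0,\ldots,0$), but it is also unnecessary: $\|AX_n\| \le \|A\|\,\|X_n\|$ already yields $\|X_n\| \to \infty$.
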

We will refer to~\eqref{eq:tracecondition} as the \textit{trace condition}.

It turns out that the local central limit theorem implies the following lower bound on the number of measures needed to generate a transient walk.
\begin{proposition}\label{pro:localclt}
Let $\mu_1, \ldots, \mu_k$ be mean $0$ measures in $d \geq 2k+1$ with $2+\beta$ moments, for some $\beta>0$. Then the random walk $X$ generated by these measures and an arbitrary adapted rule $\ell$ is transient.
\end{proposition}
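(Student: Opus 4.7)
My plan is to prove transience by representing $X$ in terms of $k$ independent random walks, applying the multidimensional local central limit theorem to each term, and summing via a union bound over how $n$ is split among the measures.

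On an enlarged probability space introduce independent random walks $Y^1,\dots,Y^k$, where $Y^j$ has step law $\mu_j$. Replacing each used increment $\xi^{\ell(i)}_{i+1}$ in Definition~\ref{def:randomwalk} by the next fresh draw from the $\ell(i)$-th sequence preserves the joint law of $X$ and yields the identity
\[
X_n = \sum_{j=1}^k Y^j(T_j(n)),\qquad T_j(n) = |\{i<n:\ell(i)=j\}|,\qquad \sum_j T_j(n)=n.
\]
For a bounded set $B\subset\R^d$, decomposing over compositions $m=(m_j)_{j=1}^k$ of $n$ and using $\mathbf{1}_{\{T(n)=m\}}\leq 1$ gives
\[
\pr{X_n\in B} \;\leq\; \sum_{m_1+\cdots+m_k=n}\pr{\sum_{j=1}^k Y^j(m_j)\in B}.
\]

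By the $2+\beta$-moment assumption and the nondegeneracy of each $M_j=\cov(\mu_j)$, Weyl's inequality gives $\lambda_{\min}(\sum_j m_j M_j)\geq n\lambda$ for some $\lambda>0$ independent of $m$, so the multidimensional local CLT applied to the independent sum $\sum_j Y^j(m_j)$ yields the uniform bound $\pr{\sum_j Y^j(m_j)\in B}\leq C|B|\,n^{-d/2}$. Since the number of compositions of $n$ into $k$ nonnegative parts is $\binom{n+k-1}{k-1}=O(n^{k-1})$, we obtain
\[
\pr{X_n\in B} \;\leq\; O(n^{k-1})\cdot O(n^{-d/2}) \;=\; O\bigl(n^{k-1-d/2}\bigr),
\]
which is summable in $n$ exactly when $k-1-d/2<-1$, i.e., $d>2k$, i.e., $d\geq 2k+1$, precisely our hypothesis. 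Therefore $\sum_n\pr{X_n\in B}<\infty$ for every bounded $B$, and Borel--Cantelli forces $|X_n|\to\infty$ almost surely, so $X$ is transient.

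The main obstacle is securing the uniform local CLT bound $O(n^{-d/2})$ over all compositions $m$, including those with some $m_j$ small or zero that threaten to make $\sum_j m_j M_j$ degenerate; this is where each $\mu_j$ must be $d$-dimensional (the natural implicit assumption, otherwise the rule ``always pick the degenerate $\mu_j$'' would give a lower-dimensional recurrent walk) and where uniform Edgeworth-type error control from the $2+\beta$-moment hypothesis is needed. A secondary point is the coupling identity in the first step: because $\ell(i)$ is $\F_i$-measurable and hence independent of the future fresh draws from each of the $k$ i.i.d.\ sequences, the rule-indexed process and the time-changed-sum process have the same law.
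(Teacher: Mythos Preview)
Your proposal follows essentially the same strategy as the paper: couple $X_n$ to a sum of $k$ independent walks evaluated at times summing to $n$, take a union bound over the $O(n^{k-1})$ compositions of $n$, bound each term by $O(n^{-d/2})$, and finish with Borel--Cantelli. The only substantive difference is in how the per-composition bound $\pr{\sum_j Y^j(m_j)\in B}\leq C n^{-d/2}$ is obtained. You route it through Weyl's inequality on the total covariance $\sum_j m_j M_j$ and a multidimensional local CLT, which---as you yourself flag---needs a uniform-in-$m$ version for non-i.i.d.\ sums and some care when several $m_j$ are small or zero. The paper sidesteps this entirely: by pigeonhole some $m_{j_0}\geq n/k$, and since concentration functions are monotone under convolution with an independent summand, it suffices to bound $\sup_y\pr{Y^{j_0}(m_{j_0})\in\B(y,R)}$ for a \emph{single} i.i.d.\ walk with at least $n/k$ steps, which Esseen's concentration inequality gives directly as $O((n/k)^{-d/2})$. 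This pigeonhole-plus-monotonicity trick is shorter and dissolves the uniformity obstacle you identified; both arguments implicitly require each $\mu_j$ to be $d$-dimensional, as you note.
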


We will prove Proposition~\ref{pro:localclt} in the beginning of  Section~\ref{sec:proofoftheorem} and then Theorems~\ref{thm:kmeasddim} and~\ref{thm:ddimd-1meas} in Sections~\ref{sec:tracetransience} and~\ref{sec:examples} respectively. Then in Proposition~\ref{pro:tracediagonal} in Section~\ref{sec:diagonal} we discuss the case when the covariance matrices are jointly diagonalizable.
We present a conjectured sufficient condition for transience at the end of the paper.

%


\subsection{Recurrence}\label{sec:recurrenceintro}

We now define a random walk in $d$ dimensions, which is generated by $d$ measures that are fully supported in $\R^d$ and we will prove that it is recurrent.
\newline
Let $e_0,\ldots,e_{d-1}$ be the coordinate vectors 
in~$\Z^d$.
We consider a random walk $(X_n,n=0,1,2,\ldots)$ on $\Z^d, 
d\geq 3$, defined in the following way. Fix a parameter $\gamma>0$, and
for $x=(x_0,\ldots,x_{d-1})\in\Z^d$ define
$\varrho(x)=\min\{k:|x_k|=\max_{j=0,\ldots,d-1}|x_j|\}$. Then
\[
X_{n+1} = X_n + \xi_{n+1},
\]
where $\xi_{n+1} = \pm e_{\varrho(X_n)}$ with probabilities $\frac{\gamma}{2(\gamma+d-1)}$ and $\xi_{n+1} = \pm e_k$ for $k \neq \varrho(X_n)$ with probabilities $\frac{1}{2(\gamma+d-1)}$. In words, we choose the maximal (in absolute value) coordinate of~$X_n$
with weight~$\gamma$ and all the other coordinates with weight~$1$, 
and then add~$1$ or~$-1$ to the chosen coordinate with equal
probabilities.

\begin{theorem}\label{thm:ddimrecurrence}
For each $d\geq 3$ there exists large enough $\gamma_d$ such 
that the random walk~$X$ is recurrent for all $\gamma\geq \gamma_d$.
\end{theorem}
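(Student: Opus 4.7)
The walk $X$ is actually a Markov chain on $\Z^d$, since $\varrho(X_n)$ depends only on $X_n$, and it is easily seen to be irreducible on $\Z^d$. I will therefore apply the Foster--Lyapunov criterion for recurrence: exhibit $f : \Z^d \to [0,\infty)$ with $f(x) \to \infty$ as $\|x\|\to\infty$ and $\econd{f(X_{n+1}) - f(X_n)}{X_n = x} \leq 0$ for all $x$ outside a finite set $B$. Since the increments of $X$ are bounded, $(f(X_{n \wedge \tau_B}))_n$ is then a nonnegative supermartingale, hence converges almost surely, which combined with $f\to\infty$ at infinity precludes $\|X_n\|\to\infty$ and forces the walk to enter $B$ infinitely often; irreducibility then upgrades this to recurrence in the usual sense.

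To identify a candidate $f$, I would begin from the easily verified identities $\econd{X_{n+1}-X_n}{X_n}=0$ and $\econd{\|X_{n+1}\|^2 - \|X_n\|^2}{X_n}=1$ (each step moves exactly one coordinate by $\pm 1$ symmetrically, and the weights sum to one), together with the fact that the conditional second moment of $\|X_{n+1}\|^2-\|X_n\|^2$ is, up to lower order, $4\,r(X_n)\,\|X_n\|^2$, where $r(x) = \|x\|_\infty^2 / \|x\|^2 \in [1/d,1]$ is a shape parameter measuring how concentrated $x$ is on a single coordinate. Consequently, the naive radial candidate $\log\|x\|^2$ has drift of order $(1 - 2r)/\|x\|^2$, whose sign depends on $r$: it is negative only when one coordinate dominates the others. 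I therefore expect the Lyapunov function to have the form $f(x) = \psi(\|x\|^2) + h(x)$, with $\psi$ radial (a logarithm or a small power) and $h$ a shape-correction depending on the sorted absolute values of the coordinates (equivalently on $r$). The role of $h$ is to contribute a compensating negative drift precisely where $\psi$ fails, using the fact that for large $\gamma$ the walk spends most of its motion on the max coordinate, which biases the dynamics of $r(X_n)$ in a controllable way.

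The principal obstacle is the treatment of the boundary configurations where two or more coordinates are tied at the maximum absolute value. At such states the index $\varrho(X_n)$ can change within a single step, producing a reflection-type positive contribution to the drift of both $\psi(\|x\|^2)$ and $h(x)$ (the max $\|X_n\|_\infty$ can only move up or stay whenever it is tied). Dominating this boundary contribution by the negative interior drift requires $\gamma$ to be large as a function of $d$, which is precisely the source of the threshold $\gamma_d$: the interior negative drift of $f$ has magnitude of order $\gamma/\|x\|^2$, while the boundary positive drift is of order $1/\|x\|$ but occurs only with small probability, and the balance between these scales fixes the required size of $\gamma_d$. I expect the bulk of the work to lie in this casework---splitting by the number of coordinates tied at the maximum, Taylor-expanding $\psi$ and $h$ in the gap sizes, and carefully tuning the constants in $h$ so that the total drift is nonpositive outside a finite set.
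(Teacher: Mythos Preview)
Your overall strategy---Foster--Lyapunov with a radial function modulated by a shape correction depending on $r(x)=\|x\|_\infty^2/\|x\|^2$---is exactly what the paper does. The paper takes the multiplicative form $f(x)=\bigl(1-\alpha\psi(x/\|x\|)\bigr)\|x\|^\alpha$ with $\alpha$ small, which in the regime $\alpha\to 0$ is essentially your additive $\log\|x\|+h(r)$; the shape function $\psi$ ultimately depends only on the paper's radial variable $r_p$, which satisfies $1/(1+r_p^2)=r(x)$, so the two shape parameters coincide. Your computation of the drift of $\log\|x\|^2$ as $(1-2r)/\|x\|^2$ (for large $\gamma$) is correct and is precisely the term the paper's $\psi$ is built to compensate via an explicit antiderivative construction.

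Where your plan goes wrong is the mechanism at the boundary and the role of $\gamma$. You cannot dominate a boundary contribution of order $1/\|x\|$ by an interior drift of order $1/\|x\|^2$: for the supermartingale inequality you need nonpositivity \emph{pointwise}, and at a tie configuration the leading drift of $f$ is genuinely of order $\|x\|^{\alpha-1}$, one order larger than in the interior. The paper resolves this not by making the boundary term small but by making it \emph{negative}: the shape function is constructed with $\psi'<0$ (equivalently, the correction decreases as $r(x)$ increases), so that the reflection effect you describe---$\|X_n\|_\infty$ can only go up or stay when tied---pushes $f$ \emph{down}. Ensuring $\psi'<0$ while simultaneously making the interior main term uniformly positive is the actual content of the construction, carried out by integrating an auxiliary function $h$ against $(1-r_p^2)/(1+r_p^2)^2$.

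Correspondingly, the interior drift is \emph{not} of order $\gamma/\|x\|^2$; it is of order $\|x\|^{\alpha-2}$ with a coefficient $\Phi(x,\psi)$ that is independent of $\gamma$ at leading order. Large $\gamma$ enters only to suppress an error term $\gamma^{-1}\Phi_1$ coming from the $O(1/\gamma)$ probability of moving a non-maximal coordinate in the interior, and small $\alpha$ suppresses a separate $O(\alpha)$ error. So the source of the threshold $\gamma_d$ is the interior expansion, not a competition with the boundary. If you follow your stated plan and try to choose $h$ so that the interior drift scales with $\gamma$ in order to beat a positive boundary term, you will not be able to close the argument.
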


We will prove Theorem~\ref{thm:ddimrecurrence} in Section~\ref{sec:recurrence}. The proof of this result relies on the explicit construction of a suitable Lyapunov function, but it is rather involved, so in Section~\ref{sec:recurrence} we also give simpler examples of a finite number of $d$-dimensional measures and adapted rules that generate a recurrent walk in $d$ dimensions.

\section{Proofs of transience}\label{sec:proofoftheorem}

In this section we give the proofs of the results on transience. We first prove Proposition~\ref{pro:localclt}, since its proof is short and elementary.

\begin{proof}[{\bf Proof of Proposition~\ref{pro:localclt}}]

In order to prove this proposition, let us first give an equivalent definition of the random walk that we are considering. 

For each $j=1,\ldots,k$, let $\zeta^j_1,\zeta^j_2,\ldots$
be i.i.d.\ with law $\mu_j$. 
For an adapted rule $\ell$ we define for all $j \in \{1,\ldots,k\}$ 
\[
r(j,i) = \sum_{m=1}^{i}\1(\ell(m)=j)
\]
and then writing $\hat{r}_i = r(\ell(i),i)+1$ we let
\[
X_{i+1} = X_i + \zeta^{\ell(i)}_{\hat{r}_i}.
\]
It is easy to see by induction that the process $X$ has the same law as the process of Definition~\ref{def:randomwalk}.
\newline
Let $R>0$ and for every $n$ we define the event
\[
A_n = \left\{\exists \ i_1,\ldots, i_k\geq 0: i_1 + \ldots + i_k = n \text{ and } \sum_{j=1}^{k} \sum_{\ell=1}^{i_j}\zeta_i^j \in \B(0,R)  \right\}.
\]
We now fix a choice of $i_1,\ldots, i_k$ such that $i_1+\ldots + i_k =n$. Then by~\cite[Corollary/Theorem~6.2]{Esseen} we get for a positive constant $c$
\[
\pr{\sum_{j=1}^{k} \sum_{\ell=1}^{i_j}\zeta_i^j \in \B(0,R)} \leq \frac{cR^d}{n^{d/2}},
\]
since there must exist some $i_j$ which is at least $n/k$. It is easy to see that the total number of $k$-tuples $(i_1,\ldots,i_k)$ with $i_j\geq 0$ for all $j$ and $\sum_j i_j =n$ is equal to $\binom{n-1}{k-1}$. Since 
$\binom{n+k-1}{k-1} \leq c_1 n^{k-1}$, for a positive constant $c_1$, we deduce that
\[
\pr{A_n} \leq c'R^d \frac{n^{k-1}}{n^{d/2}} = \frac{c'R^d}{n^{d/2-k+1}},
\]
which is summable if $d \geq 2k+1$. Hence, from Borel-Cantelli we obtain that a.s.\ only finitely many of the events $A_n$ happen. 
\newline
Now notice that for every $n$ we have
\[
\{X_n \in \B(0,R)\} \subseteq A_n,
\]
and hence we deduce that a.s.\ for all sufficiently large $n$, the random walk at time $n$ will stay outside of the ball $\B(0,R)$. Since this is true for any $R>0$, we get that if $d \geq 2k+1$ the random walk is transient.
\end{proof}

\subsection{Trace condition and transience}\label{sec:tracetransience}

In this section we give the proof of Theorem~\ref{thm:kmeasddim}. First we state and prove some preliminary results. 

The following lemma is a standard result, but we state and prove it here for the sake of completeness.
\begin{lemma}\label{lem:supermg}
Let $(S_t)$ be a random walk generated by $k$ zero mean measures and an arbitrary adapted rule $\ell$. Let $\F_t = \sigma(S_0,\ldots,S_t)$ be its natural filtration. Let $\alpha, r_0>0$ and define $\phi(x) = \norm{x}^{-\alpha} \wedge r_0^{-\alpha}$. If the process $(\phi(S_t))$ is a super-martingale, then $S$ is transient, in the sense that a.s.
\[
\norm{S_t} \to \infty \ \text{ as } \ t\to \infty.
\]
\end{lemma}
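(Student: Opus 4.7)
The plan is to combine the martingale convergence theorem with the observation that a mean-zero adapted random walk with positive jump variance cannot stay bounded. Since $0 \leq \phi \leq r_0^{-\alpha}$, the process $(\phi(S_t))$ is a non-negative bounded supermartingale, so by the martingale convergence theorem it converges almost surely to some random variable $L \in [0, r_0^{-\alpha}]$; the goal will be to show that $L = 0$ almost surely.

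I would first observe that $\{\|S_t\| \to \infty\} = \{L = 0\}$ up to null sets. One inclusion is immediate from the form of $\phi$, since $\phi(x) \to 0$ as $\|x\| \to \infty$. For the reverse, on the event $\{L > 0\}$ one eventually has $\phi(S_t) > L/2$, which forces $\|S_t\| \leq \max(r_0, (L/2)^{-1/\alpha})$ for all large $t$; in particular $\sup_t \|S_t\| < \infty$ on this event.

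It will therefore suffice to show $\pr{\sup_t \|S_t\| < \infty} = 0$. Since the $\mu_j$ are (non-degenerate) mean-zero measures with finite second moment, $\|S_t\|^2$ is a submartingale with conditional drift $\E{\|\xi_{t+1}\|^2 \mid \F_t} \geq \delta^2 := \min_j \tr(M_j) > 0$. A standard optional-stopping computation applied to $(\|S_{t \wedge \tau_M}\|^2)_t$, where $\tau_M := \inf\{t : \|S_t\| > M\}$, will then give $\pr{\tau_M < \infty} = 1$ for every $M$, yielding $\pr{\sup_t \|S_t\| \leq M} = 0$ and completing the proof. The main obstacle in executing this last step is controlling the overshoot $\|S_{\tau_M}\|^2 - M^2$ at the exit time, which I would handle using the $(2+\beta)$-moment assumption on the measures together with a routine Wald-type bound.
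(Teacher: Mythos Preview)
Your overall strategy---supermartingale convergence of $\phi(S_t)$ combined with almost-sure unboundedness of $\|S_t\|$---is exactly the one the paper uses, and your reduction to showing $\pr{\sup_t \|S_t\| < \infty} = 0$ is correct. The only substantive difference is in how you establish that unboundedness.

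The paper takes a more elementary route: it observes that there exist $u \in \S^{d-1}$, $\epsilon > 0$, and $h > 0$ such that $\pr{\langle Z_j, u\rangle > \epsilon} \geq h$ for \emph{every} $j$ (such a common direction exists because each mean-zero non-trivial $\mu_j$ fails this only for $u$ in a proper subspace, and a finite union of proper subspaces cannot cover $\R^d$). Then from any state, the walk has probability at least $h^m$ of advancing by at least $\epsilon m$ in direction $u$ over the next $m$ steps, so $\limsup_t |\langle S_t, u\rangle| = \infty$ a.s.\ by a conditional Borel--Cantelli argument. This uses nothing beyond the mean-zero hypothesis and non-triviality of the measures---no second moments, no overshoot analysis.

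Your submartingale/optional-stopping approach works in the paper's intended setting (where the measures are $d$-dimensional with $2+\beta$ moments), but it imports assumptions not stated in the lemma itself, and the overshoot control is less routine than you suggest: the naive bound
\[
\E{\|\xi_{\tau_M}\|^2 \1(\tau_M \leq t)} \leq \sum_{s\leq t}\E{\|\xi_s\|^2\1(\tau_M>s-1)} \leq C\,\E{t\wedge\tau_M}
\]
feeds back the very quantity you are trying to bound, and with the wrong constant ($C = \max_j \tr(M_j) \geq \delta^2$), so the inequality $\delta^2\E{t\wedge\tau_M}\leq 2M^2+2C\E{t\wedge\tau_M}$ is vacuous. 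One can rescue the argument (e.g., by truncating jumps, or by applying a martingale strong law to the Doob decomposition of $\|S_t\|^2$), but the paper's directional argument sidesteps all of this in two lines.
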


\begin{proof}[{\bf Proof}]
We first show that a.s.\ 
\begin{align}\label{eq:limsupas}
\limsup_{t\to \infty} \norm{S_t} = \infty.
\end{align}
Indeed, there exist $u \in \S^{d-1}$, $\epsilon>0$ and $h>0$ such that for all $j\in \{1,\ldots,k\}$
\[
\pr{\langle Z_j, u\rangle >\epsilon} \geq h, 
\]
where $Z_j \sim \mu_j$.
This implies that for all $m,n\in \N$ we have 
\[
\prcond{\langle S_{n+m} - S_n, u\rangle >\epsilon m}{\F_n}{} \geq h^m.
\]
Hence this shows that a.s.\ $\limsup_t |\langle S_t, u\rangle | \geq \epsilon m/2$ for all $m$, and so~\eqref{eq:limsupas} holds.
Clearly, this implies that a.s.\ 
\begin{align}\label{eq:liminf}
\liminf_{t\to \infty} \phi(S_t) = 0.
\end{align}
Since $(\phi(S_t))_t$ is a positive super-martingale, the a.s.\ super-martingale convergence theorem gives that $\lim_{t\to \infty}\phi(S_t)$ exists a.s.\ and thus from~\eqref{eq:liminf} we deduce that a.s.\ $\lim_{t\to \infty} \phi(S_t) = 0$,
which means that a.s.\ $\norm{S_t} \to \infty$ as $t\to \infty$.
\end{proof}

%

The following lemma shows that if the covariance matrices of the measures used to generate the walk~$X$ satisfy the trace condition~\eqref{eq:tracecondition}, then there is a function $\phi$ such that $\phi(X)$ is a super-martingale. 

\begin{lemma}\label{lem:taylorexpansion}
Let $\phi(x)=\norm{x}^{-\alpha}\wedge 1$, for $x\in \R^d$. Let $\mu_1,\ldots,\mu_k$ be zero mean measures in $\R^d$ with $2+\beta$ moments, for some $\beta >0$, and with covariance matrices $M_1,\ldots,M_k$ satisfying for all $i=1,\ldots,k$
\[
\tr(M_i) > 2\lambda_{\max}(M_i).
\]
There exists $\alpha>0$ small enough and a constant $r_0$ so that if $\norm{x} \geq r_0$, then
for all $i=1,\ldots, k$ if $Z_i \sim \mu_i$ 
\begin{align}\label{eq:phiequation}
\E{\phi(x + Z_i) - \phi(x)} \leq 0.
\end{align}
\end{lemma}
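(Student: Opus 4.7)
The idea is the classical Lyapunov-function Taylor expansion. For $\|x\|\ge r_0\ge 2$ and $\|z\|\le \|x\|/2$ the entire segment $[x,x+z]$ lies in $\{\|y\|\ge 1\}$, where $\phi$ coincides with the smooth function $\psi(x)=\norm{x}^{-\alpha}$. A direct computation gives
\[
\nabla\psi(x)=-\alpha\norm{x}^{-\alpha-2}x, \qquad H(x):=\nabla^2\psi(x)=-\alpha\norm{x}^{-\alpha-2}I+\alpha(\alpha+2)\norm{x}^{-\alpha-4}xx^{T},
\]
and the third-order derivatives of $\psi$ are of order $\alpha\norm{x}^{-\alpha-3}$ on $B(x,\norm{x}/2)$. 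Using $\E{Z_i}=0$ and $\E{Z_iZ_i^{T}}=M_i$, Taylor's theorem rewrites the quantity to be estimated as
\[
\E{\phi(x+Z_i)-\phi(x)} \;=\; \tfrac12\tr(H(x)M_i) + R(x,i),
\]
where $R(x,i)$ collects the expected remainders.

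\textbf{Main term.} I would expand $\tfrac12\tr(H(x)M_i)=\tfrac{\alpha}{2}\norm{x}^{-\alpha-2}\bigl(-\tr(M_i)+(\alpha+2)x^{T}M_ix/\norm{x}^{2}\bigr)$ and then bound $x^{T}M_ix/\norm{x}^{2}\le \lambda_{\max}(M_i)$. Since there are only finitely many measures and $\tr(M_i)>2\lambda_{\max}(M_i)$ holds for every $i$ by hypothesis, I can pick $\alpha_0>0$ and $\delta>0$ so that $\tr(M_i)-(\alpha+2)\lambda_{\max}(M_i)\ge \delta$ for all $\alpha\in(0,\alpha_0]$ and all $i$. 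This gives the Lyapunov drift $\tfrac12\tr(H(x)M_i)\le -\tfrac{\alpha\delta}{2}\norm{x}^{-\alpha-2}$.

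\textbf{Remainder.} I split $Z_i$ into $\{\norm{Z_i}\le\norm{x}/2\}$ and $\{\norm{Z_i}>\norm{x}/2\}$. Inside, Taylor's theorem with third-order remainder yields $|\phi(x+Z_i)-\phi(x)-\nabla\phi(x)\cdot Z_i-\tfrac12 Z_i^{T}H(x)Z_i|\le C\alpha\norm{x}^{-\alpha-3}\norm{Z_i}^{3}$; assuming WLOG $\beta\le 1$, the bound $\norm{Z_i}^{3}\le (\norm{x}/2)^{1-\beta}\norm{Z_i}^{2+\beta}$ turns this into an expected contribution of order $\alpha\norm{x}^{-\alpha-2-\beta}\E{\norm{Z_i}^{2+\beta}}$. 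Outside, I use the crude bounds $|\phi(x+Z_i)-\phi(x)|\le 2$, $|\nabla\phi(x)\cdot Z_i|\le\alpha\norm{x}^{-\alpha-1}\norm{Z_i}$, $|Z_i^{T}H(x)Z_i|\le C\alpha\norm{x}^{-\alpha-2}\norm{Z_i}^{2}$ combined with the Markov-type inequality $\1(\norm{Z_i}>\norm{x}/2)\le (2\norm{Z_i}/\norm{x})^{2+\beta}$; the dominant piece from this regime is the $|\phi(x+Z_i)-\phi(x)|$ term, which contributes $O(\norm{x}^{-(2+\beta)})$ after taking expectation against the moment bound.

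\textbf{Balancing and main obstacle.} Adding up, $|R(x,i)|\le C\norm{x}^{-(2+\beta)}+C\alpha\norm{x}^{-\alpha-2-\beta}$, and I need this to be at most $\tfrac{\alpha\delta}{4}\norm{x}^{-\alpha-2}$. The second piece handles itself for any $\alpha$, but the first requires $\norm{x}^{\alpha-\beta}\to 0$, i.e.\ $\alpha<\beta$. So I will finally fix any $\alpha\in(0,\alpha_0\wedge\beta)$ and then choose $r_0$ large enough to absorb both error pieces, obtaining~\eqref{eq:phiequation}. The main obstacle is precisely this last balancing: the tail error term does not inherit the $\norm{x}^{-\alpha}$ decay of the drift, so one must simultaneously keep $\alpha$ small (to preserve the strict trace inequality with the factor $\alpha+2$) and strictly below $\beta$ (to dominate the moment-only tail bound).
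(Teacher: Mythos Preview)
Your proof is correct and follows essentially the same approach as the paper: Taylor-expand $\phi$ to second order, truncate $Z_i$ at $\|x\|/2$, control the third-order remainder and the tail contribution via the $2+\beta$-moment bound, and conclude by choosing $\alpha<\beta$ so the error $O(\|x\|^{-(2+\beta)})$ is absorbed by the drift $-c\alpha\|x\|^{-\alpha-2}$. The only cosmetic difference is that the paper first diagonalizes each $M_i$ by an orthogonal change of basis (using that $\phi$ is radial) and then computes $\sum_j \phi''_{jj}(x)\lambda_j$ in coordinates, whereas you keep the Hessian in matrix form and invoke the Rayleigh-quotient bound $x^{T}M_ix/\|x\|^{2}\le\lambda_{\max}(M_i)$ directly; the two computations are equivalent.
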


\begin{proof}[{\bf Proof}]

It suffices to prove~\eqref{eq:phiequation} for a fixed $i$. Since the covariance matrix $M_i$ is positive definite, there is  an orthogonal matrix $U$ such that $U M_i U^T$ is diagonal with non-negative eigenvalues. The matrix $U M_i U^T$ is the covariance matrix of the random variable $UZ_i$.
\newline
Since $U$ is orthogonal, we get that for all $x$
\begin{align}\label{eq:orthogonal}
\phi(U(x+Z_i)) = \phi(x+Z_i) \ \text{ and } \ \phi(Ux) = \phi(x).
\end{align}
In order to prove the lemma, we will apply Taylor expansion up to second order terms to the function $\phi$ around $Ux$ evaluated at $UZ_i$. We will drop the dependence on $i$ from $UZ_i$ and write simply $Z$ and $x$ instead of $UZ$ and $Ux$ in view of~\eqref{eq:orthogonal} to lighten the notation. 

So, let $Z$ have covariance matrix $M$ which is in diagonal form and with diagonal elements $\lambda_1,\ldots, \lambda_d$. Let $\til{Z}= Z \1(\|Z\| \leq \|x\|/2)$. Note that if a.s.\ $\norm{Z} \leq B$ for a positive constant $B$, then $\til{Z} = Z$ if $\norm{x}\geq 2B$. The calculations below are a bit simpler in this case, since $\til{Z}$ would have mean $0$ and the same covariance matrix as $Z$.

If $\norm{x}\geq 2$, then $\norm{x+\til{Z}}\geq 1$ and so $\phi(x+\til{Z}) = \norm{x+\til{Z}}^{-\alpha}$. In what follows we abbreviate 
\[
\phi'_i(x) = \frac{\partial \phi(x)}{\partial x_i}, \quad \phi''_{ij}(x) = \frac{\partial^2 \phi(x)}{\partial x_i\partial x_j}, \quad \phi'''_{ijk}(x) = \frac{\partial^3 \phi(x)}{\partial x_i\partial x_j \partial x_k}.
\]
Applying Taylor expansion to $\phi$ up to second order terms gives for some $\eta \in (0,1)$
\begin{align*}
\phi(x+\til{Z}) &= \phi(x) + \langle \nabla \phi(x), \til{Z} \rangle + \frac{1}{2} \sum_{i,j=1}^{d}  \phi''_{ij}(x) \til{Z}_i \til{Z}_j +\frac{1}{3!} \sum_{i,j,k=1}^{d}  \phi'''_{ijk}(x+\eta \til{Z}) \til{Z}_i \til{Z}_j \til{Z}_k \\
&= \phi(x) + \langle \nabla \phi(x), \til{Z} \rangle + \frac{1}{2} \sum_{i,j=1}^{d}  \phi''_{ij}(x) {Z}_i {Z}_j 
 +\frac{1}{3!} \sum_{i,j,k=1}^{d}  \phi'''_{ijk}(x+\eta \til{Z}) \til{Z}_i \til{Z}_j \til{Z}_k \\
 &\quad - 
\sum_{i,j=1}^{d}  \phi''_{ij}(x) {Z}_i {Z}_j \1\left(\norm{Z}\geq \frac{\norm{x}}{2}\right).
\end{align*}

\begin{claim}\label{cl:estimates}
There exist positive constants $C,C_1$ such that for all $i,j$
\[
\left|\E{\langle \nabla \phi(x), \til{Z} \rangle } \right| \leq \frac{C}{\|x\|^{\alpha+\beta +2}} \ \text{ and } \ \E{Z_i Z_j \1(\|Z\|\geq \|x\|/2)} \leq \frac{C_1}{\norm{x}^\beta}.
\]
\end{claim}

\begin{proof}[{\bf Proof}]
By H{\"o}lder's inequality we have
\[
\E{\|Z\| \1(\|Z\|\geq \|x\|/2)} \leq \frac{2^{\beta +1}\E{\|Z\|^{\beta +2}}}{\|x\|^{\beta +1}} \leq \frac{K}{\|x\|^{\beta+1}}.
\]
Since $\E{Z}= 0$, we have $\E{\til{Z}} = \E{\til{Z}  -Z}$, and hence 
\[
\left\|\E{\til{Z}}\right\| = \|\E{Z \1(\|Z\|\geq \|x\|/2)} \| \leq \E{\|Z\| \1(\|Z\|\geq \|x\|/2)} \leq  \frac{{K}}{\|x\|^{\beta+1}}.
\]
For the first term of the Taylor expansion we have for a positive constant $C$
\[
\left|\E{\langle \nabla \phi(x), \til{Z} \rangle } \right| = \sum_{i=1}^{d} \frac{\alpha |x_i|}{\|x\|^{\alpha+2}}\left| \E{\til{Z}_i} \right| \leq \sum_{i=1}^{d} \frac{\alpha |x_i|}{\|x\|^{\alpha+2}}\left\|\E{\til{Z}}\right\|\leq \frac{\alpha d K\|x\|}{\|x\|^{\alpha+\beta +3}}= \frac{C}{\|x\|^{\alpha+\beta +2}}.
\]
For all $i,j$ we have by H{\"o}lder's inequality again
\[
\E{Z_i Z_j \1(\|Z\|\geq \|x\|/2)} \leq \E{\norm{Z}^2 \1(\|Z\|\geq \|x\|/2)} \leq \frac{C_1}{\norm{x}^\beta},
\]
thus proving the claim.
\end{proof}

We continue proving Lemma~\ref{lem:taylorexpansion}.
For the second order terms we write
\[
\E{\til{Z}_i \til{Z}_j} = \E{Z_i Z_j \1(\|Z\|\leq \|x\|/2)} = \E{Z_i Z_j} - \E{Z_i Z_j \1(\|Z\|\geq \|x\|/2)},
\]
and hence since for $i\neq j$ we have $\E{Z_i Z_j} =0$, by Claim~\ref{cl:estimates} we get
\[
\left|\E{\til{Z}_i \til{Z}_j} \right| \leq \frac{C_1}{\|x\|^{\beta}} \ \text{ and } \ \left| \sum_{i=1}^{d} \phi''_{ii}(x) \E{Z_i^2 \1(\norm{Z} \geq \norm{x}/2)} \right| \leq \frac{C_2}{\norm{x}^{\alpha+\beta+2}}.
\]
Since for all $i$ we have $\E{Z_i^2} = \lambda_i$, we obtain
\begin{align}\label{eq:tracetaylor}
\sum_{i=1}^{d} \phi''_{ii}(x) \E{Z_i^2}  = \sum_{i=1}^{d} \lambda_i \frac{-\alpha\|x\|^2 + \alpha(\alpha+2) x_i^2}{\|x\|^{\alpha+4}} = \sum_{i=1}^{d}  \frac{\alpha x_i^2(\lambda_i(\alpha+2) - \sum_{j=1}^{d}\lambda_j)}{\|x\|^{\alpha+4}}.
\end{align}

The rest of the second order terms can be bounded as follows:
\begin{align*}
\left|\sum_{i\neq j}  \phi''_{ij}(x)  \E{\til{Z}_i \til{Z}_j} 
 \right| = \sum_{i\neq j}\frac{\alpha (\alpha+2)|x_i||x_j|}{\|x\|^{\alpha+4}}\left|\E{\til{Z}_i\til{Z}_j}\right| \leq \sum_{i\neq j}\frac{\alpha (\alpha+2)|x_i||x_j|}{\|x\|^{\alpha+4}} \frac{C_1}{\|x\|^{\beta}} \leq \frac{C_3}{\|x\|^{\alpha+\beta +2}}.
\end{align*}
For the remainder in the Taylor expansion we have
\[
\max_{i,j,k}\left| \phi'''_{ijk}(x+\eta \til{Z}) \right| \leq \frac{C}{\|x + \eta \til{Z}\|^{\alpha+3}} \leq \frac{C_4}{\|x\|^{\alpha+3}},
\]
since $\left\|\til{Z}\right\| \leq \|x\|/2$. 
We want to control $\E{{\phi}(x+Z) - {\phi}(x)}$. We write
\begin{align}\label{eq:phidiffer}
\E{{\phi}(x+Z) - {\phi}(x)} = \E{{\phi}(x+Z) - {\phi}(x+\til{Z})} + \E{{\phi}(x+\til{Z}) - {\phi}(x)}
\end{align}
and by Markov's inequality since $\E{\norm{Z}^{2+\beta}} <\infty$
\[
\E{\left|{\phi}(x+Z)- {\phi}(x+\til{Z})\right|} \leq \pr{\|Z\|\geq \|x\|/2} \leq \frac{C_5}{\|x\|^{\beta +2}}.
\]

Since $\beta>0$, if we take $0<\alpha <\beta$, then we obtain that there exists a constant $r_0>1$ so that  for $\norm{x}>r_0$
\begin{align}\label{eq:thistogether}
 \left| \E{\langle \nabla \phi(x), \til{Z} \rangle} \right| &+ \frac{1}{2}\left|\sum_{i,j=1}^{d}  \phi''_{ij}(x) \E{{Z}_i {Z}_j \1\left(\norm{Z}\geq \frac{\norm{x}}{2}\right)} \right| +
\frac{1}{3!} \sum_{i,j,k=1}^{d}\left|\E{ \phi'''_{ijk}(x+\eta \til{Z}) \til{Z}_i \til{Z}_j \til{Z}_k} \right| \nonumber\\
&+ \left|\E{{\phi}(x+\til{Z}) - {\phi}(x+Z)} \right| 
 \leq \left| \frac{1}{2} \sum_{i,j=1}^{d} \phi''_{ij}(x) \E{{Z}_i {Z}_j } \right|.
\end{align}
The assumption on the trace of the matrix $M$ gives that for $\alpha$ small enough (smaller than $\beta$) $\sum_{j=1}^{d} \lambda_j > \lambda_i(\alpha+2)$ for all $i$, and hence using~\eqref{eq:tracetaylor} we get for $\norm{x}\geq r_0$
\[
\sum_{i =1}^{d} \phi''_{ii}(x) \E{{Z}_i^2} < 0.
\]
This and the inequality~\eqref{eq:thistogether} finishes the proof.
\end{proof}

We now have all the required ingredients to give the proof of Theorem~\ref{thm:kmeasddim}.

\begin{proof}[{\bf Proof of Theorem~\ref{thm:kmeasddim}}]

Let $r_0>1$ be the constant of Lemma~\ref{lem:taylorexpansion}.
Let $\til{\phi}(x) = \|x\|^{-\alpha}\wedge r_0^{-\alpha}$, for $\alpha>0$ as in Lemma~\ref{lem:taylorexpansion}. Notice that when $\norm{x}\geq r_0$, then $\til{\phi}(x) = \phi(x) = \norm{x}^{-\alpha}$.
We will first show that if $Y_t=AX_t$, then 
\begin{align}\label{eq:supermgproperty}
\econd{\til{\phi}(Y_{t+1})}{\F_t} \leq \til{\phi}(Y_t).
\end{align}

Since $r_0>1$, we have $\til{\phi}(x) \leq \phi(x)$ for all $x$. 
So we get
\begin{align*}
\econd{\til{\phi}(Y_{t+1}) - \til{\phi}(Y_t)}{\F_t} &= \econd{(\til{\phi}(Y_{t+1}) - \til{\phi}(Y_t)) \1(\norm{Y_t} \geq r_0)}{\F_t} \\& \quad
+ \econd{(\til{\phi}(Y_{t+1}) - \til{\phi}(Y_t)) \1(\norm{Y_t} < r_0)}{\F_t} \\ &\leq \econd{(\phi(Y_{t+1}) - \phi(Y_t))\1(\norm{Y_t} \geq r_0)}{\F_t},
\end{align*}
since $\til{\phi}(Y_t) = r_0^{-\alpha}$ if $\norm{Y_t} <r_0$ and
$\til{\phi}(x) \leq r_0^{-\alpha}$ for all $x$.
Since the covariance matrices of the measures used to generate the walk $Y$ satisfy the trace condition~\eqref{eq:tracecondition}, Lemma~\ref{lem:taylorexpansion}
gives that 
\[
\econd{(\phi(Y_{t+1}) - \phi(Y_t))\1(\norm{Y_t} \geq r_0)}{\F_t} \leq 0
\]
and this completes the proof of~\eqref{eq:supermgproperty}.
Therefore by Lemma~\ref{lem:supermg} we get that a.s.
$\norm{A X_t} = \norm{Y_t} \to \infty$ as $t\to \infty$. 
Since for all $t$ we have $\norm{A X_t} \leq \norm{A} \norm{S_t}$ and $\norm{A}>0$, we deduce that a.s.
\[
\norm{X_t} \to \infty \ \text{ as } t\to \infty,
\]
which concludes the proof of the theorem.
\end{proof}

\subsection{Two measures in $3$ dimensions}\label{sec:examples}

In this section we give the proof of Theorem~\ref{thm:ddimrecurrence}. 

\begin{proposition}\label{pro:trace}
Let $M_1,M_{2}$ be $3\times 3$ invertible positive definite matrices. Then there exists a $3\times 3$ matrix $A$ such that 
\[
\tr(AM_i A^T) > 2 \lambda_{\max}(A M_i A^T) \ \forall  \ i=1,2.
\]
\end{proposition}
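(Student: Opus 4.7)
The plan is to simultaneously reduce the pair $(M_1, M_2)$ to a convenient normal form by a congruence transformation, and then pick $A$ to be diagonal in that normal form. Concretely, let $M_1^{-1/2}$ denote the positive-definite square root of $M_1^{-1}$ and set $N := M_1^{-1/2} M_2 M_1^{-1/2}$. The matrix $N$ is symmetric positive definite, so by the spectral theorem there exist an orthogonal matrix $V$ and positive scalars $\mu_1 \geq \mu_2 \geq \mu_3 > 0$ with $V^T N V = \Lambda := \mathrm{diag}(\mu_1, \mu_2, \mu_3)$ (if needed, conjugate $V$ by a permutation matrix to sort the eigenvalues). Writing $A = B V^T M_1^{-1/2}$ for a still-to-be-chosen matrix $B$, a direct calculation using $V^T V = I$ gives $A M_1 A^T = B B^T$ and $A M_2 A^T = B \Lambda B^T$, so the problem reduces to finding $B$ for which both $B B^T$ and $B \Lambda B^T$ satisfy the trace condition.

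Next I would restrict to diagonal $B = \mathrm{diag}(\sqrt{x_1},\sqrt{x_2},\sqrt{x_3})$ with $x_i > 0$, so that the two matrices become $\mathrm{diag}(x_1, x_2, x_3)$ and $\mathrm{diag}(\mu_1 x_1, \mu_2 x_2, \mu_3 x_3)$. For a diagonal positive semi-definite matrix the eigenvalues are just the diagonal entries, so the trace condition $\tr > 2\lambda_{\max}$ becomes exactly the strict triangle inequality on the diagonal triple. Thus it suffices to find positive $(x_1, x_2, x_3)$ such that both $(x_1, x_2, x_3)$ and $(\mu_1 x_1, \mu_2 x_2, \mu_3 x_3)$ form the sides of a non-degenerate triangle.

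An explicit choice that works is $x_1 = \mu_2/\mu_1$ and $x_2 = x_3 = 1$. The first triple is $(\mu_2/\mu_1, 1, 1)$: its maximum is $1$ (since $\mu_2 \leq \mu_1$) and the other two entries sum to $1 + \mu_2/\mu_1 > 1$. The second triple is $(\mu_2, \mu_2, \mu_3)$: its maximum is $\mu_2$ and the other two entries sum to $\mu_2 + \mu_3 > \mu_2$ (since $\mu_3 > 0$). Both triangle inequalities are strict, so $A = B V^T M_1^{-1/2}$ is the desired matrix. The only real conceptual step is recognizing the simultaneous congruence reduction to $(I, \Lambda)$; once this is in place, the proposition becomes an elementary exercise, and the $d = 3$ nature of the problem enters only through the reading of the trace condition as a triangle inequality, which allows such a short explicit witness.
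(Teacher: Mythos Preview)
Your proof is correct and follows essentially the same approach as the paper: reduce $(M_1,M_2)$ by congruence to $(I,\Lambda)$ with $\Lambda=\mathrm{diag}(\mu_1,\mu_2,\mu_3)$ (the paper does this via an orthogonal diagonalization of $M_1$ followed by a diagonal scaling, you via $M_1^{-1/2}$ directly), then apply the diagonal matrix $\mathrm{diag}(\sqrt{\mu_2/\mu_1},1,1)$ and check the resulting triples. Your framing of the trace condition as a strict triangle inequality and your omission of the paper's unnecessary case split (it first checks whether $\Lambda$ already satisfies the condition) are mild streamlinings, but the argument is the same.
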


\begin{proof}[{\bf Proof}]
We prove Proposition~\ref{pro:trace} by constructing the matrix $A$ of Theorem~\ref{thm:kmeasddim} directly. 

Let $\mu_1,\mu_2$ have covariance matrices $M_1$ and $M_2$ respectively and $\xi_i \sim \mu_i$ for $i=1,2$. Since $M_1$ is positive definite, there exists an orthogonal matrix $U$ such that $U M_1 U^T$ is diagonal, i.e.
\[
U M_1 U^T= \left( \begin{array}{ccc}
a & 0 & 0 \\
0 & b & 0 \\
0 & 0 & c \end{array} \right),
\]
where $a,b,c>0$ are the eigenvalues of $M_1$.
If we now multiply the vector $U\xi_1$ by the matrix $D$ given by
\[
D = \left( \begin{array}{ccc}
\frac{1}{\sqrt{a}} & 0 & 0 \\
0 & \frac{1}{\sqrt{b}} & 0 \\
0 & 0 & \frac{1}{\sqrt{c}} \end{array} \right),
\]
then $\cov(DU\xi_1) = I$, where $I$ stands for the $3\times 3$ identity matrix. 

So far we have applied the matrix $DU$ to the vector $\xi_1$ and we have to apply the same transformation to the vector $\xi_2$. The vector $DU\xi_2$ will have covariance matrix $\til{M}_2$. Since it is positive definite, it can be diagonalised, so there exists an orthogonal matrix $V$ such that 
\[
V\til{M}_2 V^T = \left( \begin{array}{ccc}
\lambda_1 & 0 & 0 \\
0 & \lambda_2 & 0 \\
0 & 0 & \lambda_3 \end{array} \right),
\]
where $\lambda_1\geq \lambda_2\geq\lambda_3>0$ are the eigenvalues in decreasing order. Applying the same transformation to $DU\xi_1$ is not going to change its identity covariance matrix, since $V$ is orthogonal. 
\newline
The condition we want to satisfy is 
\[
\lambda_1 + \lambda_2 +\lambda_3 > 2\lambda_i,
\]
for all $i=1,2,3$. Since the eigenvalues are in decreasing order, it is clear that this inequality is always satisfied for $i=2,3$. Suppose that $\lambda_2+\lambda_3 \leq \lambda_1$. Multiplying $DU\xi_2$ by the matrix
\[
B = \left( \begin{array}{ccc}
\frac{\sqrt{\lambda_2}}{\sqrt{\lambda_1}} & 0 & 0 \\
0 & 1 & 0 \\
0 & 0 & 1 \end{array} \right)
\]
will give us a random vector with covariance matrix 
\[
\left( \begin{array}{ccccc}
\lambda_2 & 0 & 0 \\
0 & \lambda_2 & 0 \\
0 & 0 & \lambda_3 \end{array} \right),
\]
which clearly satisfies the trace condition~\eqref{eq:tracecondition}. Multiplying $VDU\xi_1$ by the same matrix will give us a vector with covariance matrix
\[
\left( \begin{array}{ccc}
\frac{{\lambda_2}}{{\lambda_1}} & 0 & 0 \\
0 & 1 & 0 \\
0 & 0 & 1 \end{array} \right)
\]
which satisfies the trace condition~\eqref{eq:tracecondition}, since $\lambda_2\leq \lambda_1$.
\end{proof}

\begin{proof}[{\bf Proof of Theorem~\ref{thm:ddimd-1meas}}]
By projection to the first three coordinates, it is clear that it suffices to prove the theorem in $3$ dimensions.
\newline
In $d=3$, the statement of the theorem follows from Theorem~\ref{thm:kmeasddim} and Proposition~\ref{pro:trace}.
\end{proof}

\begin{remark}\rm{
It can be seen from the proof of Proposition~\ref{pro:trace} that if the measures $\mu_1$ and $\mu_2$ are supported on any $3$ dimensional subspaces of $\R^d$, then a walk $X$ generated by these measures and an arbitrary adapted rule is transient.  
}
\end{remark}

\subsection{The diagonal case}\label{sec:diagonal}

In this section we consider a particular case when for some basis of $\R^d$ the covariance matrices are in diagonal form and invertible. In this setting we prove that a random walk generated by $d-1$ measures and an arbitrary rule $\ell$ is transient.

\begin{proposition}\label{pro:tracediagonal}
Let $d\geq 4$ and $\mu_1,\ldots,\mu_{d-1}$ be mean $0$ probability measures in $\R^d$ with $2+\beta$ moments, for some $\beta>0$. Let $M_1, \ldots, M_{d-1}$ be their covariance matrices and suppose that $M_i M_j = M_j M_i$ for all $i,j$. Then there exists a $d\times d$ matrix $A$ such that 
\[
\tr(AM_i A^T) > 2 \lambda_{\max}(A M_i A^T) \ \forall  \ i\leq d-1.
\]
Therefore, a random walk $X$ generated by the measures $(\mu_i)_{i=1}^{d-1}$ and an arbitrary adapted rule $\ell$ is transient.
\end{proposition}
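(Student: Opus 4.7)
The plan is to reduce the problem, via simultaneous diagonalization, to an elementary question about positive weights, and then solve that question by exploiting that $d > d-1$.

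Since $M_1,\dots,M_{d-1}$ are symmetric, positive definite and commute pairwise, there is an orthogonal matrix $U$ such that $U M_i U^T = \Lambda_i := \mathrm{diag}(\lambda_{i,1},\dots,\lambda_{i,d})$ with each $\lambda_{i,j}>0$. Looking for $A$ of the form $A = BU$, where $B = \mathrm{diag}(b_1,\dots,b_d)$ with $b_j > 0$, we get $A M_i A^T = B \Lambda_i B = \mathrm{diag}(c_j \lambda_{i,j})$ with $c_j := b_j^2$. The trace condition $\tr(AM_iA^T) > 2 \lambda_{\max}(AM_iA^T)$ becomes: for every $i \leq d-1$ and every $j$,
\[
c_j \lambda_{i,j} < \sum_{j' \neq j} c_{j'} \lambda_{i,j'}.
\]
After rescaling each $\Lambda_i$ by a positive constant (which does not affect the condition), we may assume each $\lambda_i := (\lambda_{i,j})_j$ is a probability vector on $\{1,\dots,d\}$.

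To construct $c$, pick $\pi(i) \in \arg\max_j \lambda_{i,j}$ for each $i$. The set $D := \{\pi(1),\dots,\pi(d-1)\}$ has $|D| \leq d-1 < d$, so by pigeonhole there exists a coordinate $j_0 \notin D$. Fix $c_j = 1$ on $\{1,\dots,d\} \setminus (D \cup \{j_0\})$, and treat $a_j := c_j$ for $j \in D$ and $\beta := c_{j_0}$ as free positive parameters, yielding up to $d$ degrees of freedom. For each measure $i$, the potentially binding inequalities among the $d$ required for that measure are the \emph{dominance} inequality $a_{\pi(i)} \lambda_{i,\pi(i)} < \sum_{j \neq \pi(i)} c_j \lambda_{i,j}$ and the \emph{safe} inequality $\beta \lambda_{i,j_0} < \sum_{j \neq j_0} c_j \lambda_{i,j}$; once appropriately chosen parameters make these hold, the remaining $d-2$ inequalities per measure (which involve $c_j \lambda_{i,j}$ for neither a dominant nor the boosted safe coordinate) are easily verified to hold.

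The main obstacle is the sub-case where $\pi$ is a bijection onto $D$ with $|D^c| = 1$, so that $j_0$ is the unique balancer for every measure. A uniform choice $a_j \equiv 1$, varying only $\beta$, may fail here: measures with very small $\lambda_{i,j_0}$ demand a large $\beta$ to satisfy dominance, but those with larger $\lambda_{i,j_0}$ then violate the safe inequality. To resolve this, I would allow $a_{\pi(i)}$ to depend on $i$, shrinking it for measures with extreme dominance (large $\lambda_{i,\pi(i)}/\lambda_{i,j_0}$). Since $\pi(i)$ is non-dominant for any $i' \neq i$ (so $\lambda_{i',\pi(i)} < \tfrac12$), this perturbs the inequalities of the other measures only weakly, and $\beta$ can then be chosen in the common intersection of the now-compatible valid intervals. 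With such $c$ in hand, setting $B = \mathrm{diag}(\sqrt{c_j})$ and $A = BU$ and invoking Theorem~\ref{thm:kmeasddim} gives transience of $X$.
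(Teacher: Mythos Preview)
Your reduction via simultaneous diagonalization to the search for positive weights $c_1,\dots,c_d$ is exactly the paper's starting point, and your pigeonhole observation that there is a coordinate $j_0$ not dominant for any measure is precisely the key geometric fact the paper exploits. The divergence comes after that: you attempt to construct $c$ directly, while the paper runs a variational argument.

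Your construction has a genuine gap. Two places are not justified:
\begin{itemize}
\item The assertion that ``the remaining $d-2$ inequalities per measure are easily verified to hold'' presupposes that the free parameters $a_j$ and $\beta$ have already been fixed, but the argument so far has only constrained them through the dominance and safe inequalities. Once you start shrinking various $a_{\pi(i)}$, the right-hand sides of the other inequalities for measure $i'$ lose the term $a_{\pi(i)}\lambda_{i',\pi(i)}$, and there is no reason those inequalities remain slack.
\item The ``main obstacle'' resolution is only a heuristic. Saying that shrinking $a_{\pi(i)}$ ``perturbs the inequalities of the other measures only weakly'' because $\lambda_{i',\pi(i)}<\tfrac12$ does not give a proof: if several $a_{\pi(i)}$ must be shrunk simultaneously the cumulative loss on the right-hand sides can be large, and you have not shown that the intervals of admissible $\beta$ for the various measures actually intersect. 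There is a potential circularity (shrinking one coefficient forces adjustments elsewhere) that you have not resolved.
\end{itemize}

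The paper avoids all of this by minimizing $\Psi(A)=\max_i \lambda_{\max}(AM_iA^T)/\tr(AM_iA^T)$ over diagonal $A$; a compactness argument shows the minimizer $\tilde A$ exists and is invertible. At the minimizer one argues by contradiction: either some $\tilde M_j=\tilde A M_j\tilde A^T$ with $j$ in the active set has top eigenvalue of multiplicity at least two, which immediately gives $\tr(\tilde M_j)>2\lambda_{\max}(\tilde M_j)$ and hence $\Psi(\tilde A)<\tfrac12$; or every active $\tilde M_j$ has simple top eigenvalue, in which case boosting the free coordinate $j_0$ by $1+\varepsilon$ strictly increases every trace while keeping every top eigenvalue fixed (for small $\varepsilon$), contradicting minimality. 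This perturbative use of $j_0$ at the extremum is what replaces your attempted global construction, and it is why the paper never has to track the full system of $d(d-1)$ inequalities.
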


Before giving the proof of Proposition~\ref{pro:tracediagonal} we prove the following:
\begin{claim}\label{cl:continuity}
Let $M_1,\ldots, M_k$ be $d\times d$ invertible diagonal matrices with positive entries on the diagonal. For $A \neq 0$ we define
\begin{align}\label{eq:defpsi}
\Psi(A) = \max_{1\leq j\leq k} \frac{\norm{A M_jA^T}}{\tr(AM_jA^T)}.
\end{align}
Then the minimum of $\Psi(A)$ exists among all diagonal matrices $A$ and the minimizing matrix $\til{A}$ is invertible.
\end{claim}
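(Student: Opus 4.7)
The plan is to exploit the scale-invariance $\Psi(cA)=\Psi(A)$ for $c\ne 0$ to reduce the minimization to a compact set, establish existence of a minimizer by continuity, and then prove invertibility of the minimizer by a small perturbation that uses the diagonal structure in an essential way.

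First I would unpack the definition under the diagonal assumption. Writing $A=\mathrm{diag}(a_1,\ldots,a_d)$ and $M_j=\mathrm{diag}(m_{j,1},\ldots,m_{j,d})$ with $m_{j,i}>0$, the product $AM_jA^T=\mathrm{diag}(a_1^2 m_{j,1},\ldots,a_d^2 m_{j,d})$ is diagonal and positive semidefinite, so
\[
\tr(AM_jA^T)=\sum_{i=1}^d a_i^2 m_{j,i}, \qquad \|AM_jA^T\|=\max_{1\le i\le d} a_i^2 m_{j,i},
\]
where the last identity uses that the spectral norm of a PSD diagonal matrix equals its largest diagonal entry. In particular $\tr(AM_jA^T)$ is strictly positive whenever $A\ne 0$, so $\Psi$ is well defined and continuous on the punctured space of nonzero diagonal matrices, with $\Psi(A)\in[1/d,1]$.

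By homogeneity I restrict $\Psi$ to the compact set $K=\{A\ \text{diagonal}:\sum_i a_i^2=1\}\cong \mathbb{S}^{d-1}$. On $K$ the denominators are bounded below by $\min_{j,i} m_{j,i}>0$, so each of the $k$ ratios is continuous on $K$, and hence so is their pointwise maximum $\Psi$. By compactness $\Psi$ attains its infimum at some $\tilde A\in K$.

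To finish I show $\tilde A$ is invertible by contradiction. If some diagonal entry vanishes, relabel so $a_d=0$; since $\tilde A\in K$ not all remaining entries vanish, and positivity of the $m_{j,i}$ gives $\max_{i\le d} a_i^2 m_{j,i}=\max_{i\le d-1}a_i^2 m_{j,i}>0$ for each $j$. Now set $A(\epsilon)=\mathrm{diag}(a_1,\ldots,a_{d-1},\epsilon)$ for $\epsilon>0$; no renormalization is needed by scale invariance. For $\epsilon$ small enough (depending on the finitely many $m_{j,d}$), $\epsilon^2 m_{j,d}<\max_{i\le d-1}a_i^2 m_{j,i}$ holds simultaneously for all $j=1,\ldots,k$, so each numerator is unchanged while each denominator strictly increases by $\epsilon^2 m_{j,d}>0$. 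Hence every ratio and therefore $\Psi(A(\epsilon))$ is strictly smaller than $\Psi(\tilde A)$, contradicting minimality. Multiple zero entries are handled by perturbing all of them to $\epsilon$ at once.

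The main obstacle is really the perturbation step, and what makes it transparent is precisely the diagonal structure: the largest eigenvalue of $AM_jA^T$ is literally one of its diagonal entries, so switching on a zero coordinate by $\epsilon$ leaves the numerator untouched while strictly raising the trace. Without a simultaneous diagonalization assumption the maximum eigenvalue of $AM_jA^T$ could shift under such a perturbation and this clean monotonicity would be lost, which is why the claim is stated only for the commuting (jointly diagonal) case.
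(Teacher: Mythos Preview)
Your proof is correct and follows essentially the same approach as the paper: compactify via scale invariance, obtain a minimizer by continuity, and then derive a contradiction by switching a zero diagonal entry to a small $\epsilon$ so that the numerators are unchanged while the traces strictly increase. The only cosmetic differences are that you normalize via the Frobenius sphere $\sum_i a_i^2=1$ rather than the operator-norm sphere $\|A\|=1$, and that your explicit diagonal computation of $\|AM_jA^T\|$ and $\tr(AM_jA^T)$ makes the perturbation step slightly cleaner than the paper's version.
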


\begin{proof}[{\bf Proof}]
Since $M_j$ is an invertible positive definite matrix, we can write $M_j = B_j B_j^T = B_j^2$, where $B_j = B_j^T$ is an invertible matrix.

Since scaling $A$ does not change the ratio in~\eqref{eq:defpsi}, we may assume that $\|A\|=1$ and restrict attention to such matrices. It is easy to see that the set $S =\{ A \text{ diagonal }: \|A\| =1\}$ is compact and the function $f_j(A) = \|AM_j A^T\|$ is continuous on $S$.

Let $g_j(A) = \tr(AM_j A^T) = \tr(AB_jB_j^TA^T) = \tn AB_j\tn ^2$, where $\tn C\tn ^2 = \sum_{i,j=1}^{d} c_{i,j}^2$ and we used $\tr(CC^T) = \tn C\tn ^2$. 

Since $B_j$ is invertible, we have $A B_j \neq 0$ for $A\neq 0$, so $g_j$ does not vanish on $S$. Thus as $g_j$ is continuous on $S$, we conclude that 
\[
A\mapsto \max_{1\leq j \leq k} \frac{f_j(A)}{g_j(A)}
\]
is continuous on $S$ and hence has a minimum. 

Let $\til{A}$ be the minimizing matrix with diagonal elements $\lambda_1,\ldots, \lambda_d \geq 0$. We will show that $\til{A}$ is invertible.
Suppose the contrary and assume without loss of generality that $\lambda_d =0$.
\newline
We prove that if we replace $\lambda_d =0$ by a small $\epsilon>0$, then we get a matrix $\til{A}_\epsilon$ with $\Psi(\til{A}_\epsilon) < \Psi(\til{A})$. Let the diagonal elements of $M_i$ be $(a_j^i)_{j=1}^{d}$, which are all strictly positive. Then for the matrix $M_i$ we will have for $s$ such that $\norm{M_i} = a_s^i$
\[
\frac{\lambda_{\max}(\til{A}M_i\til{A})}{\tr(\til{A} M_i \til{A})} = \frac{\lambda_s a_s^i}{\sum_{j}\lambda_j a_j^i}. 
\]

If $\til{A}_\epsilon$ has the same elements as $\til{A}$ except for the $(d,d)$ element which is replaced by $\epsilon>0$ such that 
$\epsilon< \frac{\lambda_i a_j^i}{a_d^i}$
for all $i=1,\ldots, d-1$ and all $j=1,\ldots, d-1$, then 
\[
\tr(\til{A}_\epsilon M_i \til{A}_\epsilon) = \tr(\til{A} M_i \til{A}) + \epsilon a_d^i,
\]
while $\lambda_{\max}(\til{A}_\epsilon M_i \til{A}_\epsilon) = \lambda_{\max}(\til{A} M_i \til{A})$. 

Replacing each $0$ element of $\til{A}$ by a sufficiently small number gives a matrix with smaller value of~$\Psi$, which contradicts the choice of $\til{A}$. Hence this shows that $\til{A}$ is invertible. 
\end{proof}

\begin{proof}[{\bf Proof of Proposition~\ref{pro:tracediagonal}}]

Since $M_i M_j = M_j M_i$ for all $i,j$, it follows (see for instance \cite[Theorem~2.5.5]{HornJohnson}) that there is
one orthogonal matrix that diagonalizes all the matrices $M_i$. So from now on we suppose that the $M_i$'s are diagonal. 

Recall the definition of $\Psi$ from~\eqref{eq:defpsi}.
Let $\til{A}$ be the $d\times d$ invertible matrix that minimizes $\Psi$
among all diagonal matrices (recall Claim~\ref{cl:continuity}). 

Write $\til{M}_i = \til{A} M_i \til{A}^T$ and 
\[
J = \left\{ j\leq d-1: \frac{\|\til{M}_j\|}{\tr(\til{M}_j)} = \Psi(\til{A})\right\}.
\]
Since $\til{A}$ and $M_i$ are diagonal invertible matrices, it follows that $\til{M}_i$ is also a diagonal invertible matrix. 
For each $j \leq d-1$ we can find $v_j \in \R^d$ such that $\|v_j\|=1$ and $\til{M}_j v_j = \|\til{M}_j\| v_j$. Note that since $\til{M}_j$ is diagonal, it follows that $v_j$ can be chosen to be one of the standard basis vectors of $\R^d$. Let $w\in \R^d$ have $\|w\|=1$ and $w\perp \{v_1,\ldots, v_{d-1}\}$. Then $w$ will also be one of the standard basis vectors of $\R^d$.

Next, we separate two cases. 
\newline
Case 1: For some $j \in J$ there is $u_j \perp v_j$ with $\|u_j\|=1$ and $\til{M}_j u_j = \|\til{M}_j\| u_j$. In this case,
\begin{align}\label{eq:star}
\tr(\til{M}_j) > \langle \til{M}_j v_j , v_j \rangle + \langle \til{M}_j u_j, u_j\rangle = 2\|\til{M}_j\|,
\end{align}
where the strict inequality follows from the fact that $\til{M}_j$ is invertible. Hence in the case where $\norm{\til{M}_j}$ has multiplicity at least $2$, we are done.  

Case 2: For each $j\in J$ the leading eigenvalue $\|\til{M}_j\|$ of $\til{M}_j$ has multiplicity one. We will show that this case leads to a contradiction; that is we can find another matrix with smaller value of $\Psi$ contradicting the choice of $\til{A}$ as the minimizer.

Let $A_\epsilon$ be the $d\times d$ matrix such that $A_\epsilon w = (1+\epsilon)w$ and $A_\epsilon z = z$ for all $z\perp w$. Note that $A_\epsilon$ will also be diagonal, since $w$ is one of the standard basis vectors of $\R^d$. 

Let us denote by $\gamma_j$ the second largest eigenvalue of $\til{M}_j$. Then the assumption of case 2 implies that for each $j\in J$ we have $\gamma_j <\|\til{M}_j\|$ and $\|\til{M}_j y\| \leq \gamma_j \|y\|$ for all $y \perp v_j$.

Choose $\epsilon>0$ such that $(1+\epsilon)^2 \|\til{M}_i\| <\tr(\til{M}_i)\Psi(\til{A})$ for all $i \notin J$ and $(1+\epsilon)^2 \gamma_j <\|\til{M}_j\|$ for all $j \in J$. 

Note that since $A_\epsilon$ is diagonal, $A_\epsilon^T = A_\epsilon$ and $\til{A}A_\epsilon$ is diagonal satisfying
\[
\Psi(A_\epsilon \til{A}) = \max_{1\leq i\leq d-1} \frac{\|A_\epsilon \til{M}_i A_\epsilon\|}{\tr(A_\epsilon \til{M}_i A_\epsilon)}.
\]
By completing $\{w,v_j\}$ to an orthonormal basis $\{b_m\}|_{m=1}^{d}$ of $\R^d$ we see that for all $i\leq d-1$ 
\begin{align}\label{eq:twostars}
\tr(A_\epsilon \til{M}_i A_\epsilon) > \tr(\til{M}_i),
\end{align}
since $\tr(M) = \sum_{m=1}^{d} \langle M b_m, b_m \rangle$ for any matrix $M$ and any orthonormal basis. The strict inequality follows again from the fact that the matrix $A_\epsilon \til{M}_i A_\epsilon$ is invertible. Also 
\[
\|A_\epsilon \til{M}_i A_\epsilon\| \leq \|A_\epsilon\|^2 \|\til{M}_i\| = (1+\epsilon)^2 \|\til{M}_i\|
\]
and for $j\in J$ we have for all $y\perp v_j$
\[
\|A_\epsilon M_j A_\epsilon y \| \leq (1+\epsilon) \|M_j (A_\epsilon y)\| \leq (1+\epsilon)\gamma_j \|A_\epsilon y\| \leq (1+\epsilon)^2 \gamma_j \|y\|,
\]
since $A_\epsilon y\perp v_j$.

We conclude that $\Psi(A_\epsilon \til{A}) <\Psi(\til{A})$ by considering separately in the max defining $\Psi$ the indices $i\notin J$ and $i\in J$, and applying~\eqref{eq:twostars}. This contradicts the choice of $\til{A}$ as a minimizer and establishes that case 2 is impossible. 
\end{proof}


\section{More measures may yield a recurrent walk}\label{sec:recurrence}

In this section we prove that the random walk described in Section~\ref{sec:recurrenceintro} is recurrent. First we give the simpler example that was mentioned in the Introduction.

Let $\S^{d-1}$ be the $d$-dimensional unit sphere, i.e.\ $\S^{d-1} = \{x\in \R^d: \norm{x}=1 \}$.
Let $C_1, \ldots, C_k$ be caps that cover the surface of the sphere with the property that the angle between any two vectors from the origin to points on the same cap is strictly smaller than ${\pi}/{2}$. For every cap $C_i$, for $i=1,\ldots,k$, we write $m(C_i)$ for the vector joining $0$ to the center of the cap $C_i$. Then we choose $v_{i,1},\ldots,v_{i,d-1}$ to be $d-1$ orthogonal vectors on the hyperplane orthogonal to $m(C_i)$. 

For every $x \in \R^d$, we write $C(x)$ for the first cap in the above ordering such that the vector joining $0$ and $x$ intersects that cap. 

\begin{theorem}\label{thm:moremeasures}
Let $X$ be a walk in $\R^d$ that moves as follows. When at $x$ it moves along the direction of $m(C(x))$ either $+1$ or $-1$ each with probability $1/2$ and along each of the other $d-1$ directions, i.e.\ along the vectors $v_{i(x),1},\ldots,v_{i(x),d-1}$ it moves independently as follows: $\pm 1$ with probabilities $\epsilon/2$ and stays in place with the remaining probability. Then $X$ is a recurrent walk, i.e.\ there is a compact set that is visited by $X$ infinitely many times a.s.
\end{theorem}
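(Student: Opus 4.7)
The idea is to produce a Lyapunov function $f$ such that $(f(X_n))$ is a supermartingale once $\|X_n\|$ is sufficiently large, which will force the walk to revisit a bounded set. Note that the trace condition $\tr(M_x) > 2\lambda_{\max}(M_x)$ of Theorem~\ref{thm:kmeasddim} becomes $1+(d-1)\epsilon > 2$ here, which fails for small $\epsilon$, so the negative-power Lyapunov function of Lemma~\ref{lem:taylorexpansion} would give a submartingale instead. I would therefore take the critical choice $f(x) = \log\|x\|$, with a cutoff near the origin.

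A standard second-order Taylor expansion, using that the jumps are uniformly bounded, yields
\[
\econd{f(X_{n+1}) - f(X_n)}{X_n = x} = \frac{1}{\|x\|^2}\left(\frac{\tr(M_x)}{2} - \langle M_x \hat{x}, \hat{x}\rangle\right) + O(\|x\|^{-3}),
\]
where $M_x$ is the one-step covariance at $x$ and $\hat{x} = x/\|x\|$. In the orthonormal basis $\{m(C(x)), v_{i(x),1}, \ldots, v_{i(x),d-1}\}$ the covariance $M_x$ is diagonal with entries $(1,\epsilon,\ldots,\epsilon)$. Setting $c = \langle \hat{x}, m(C(x))\rangle$, the parenthesized quantity reduces to $-(c^2 - 1/2) + \epsilon\bigl((d-3)/2 + c^2\bigr)$.

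The geometric key is the uniform lower bound $c \geq c_0 > 1/\sqrt{2}$: the hypothesis that any two points of a common cap form an angle strictly less than $\pi/2$ means each cap has angular diameter less than $\pi/2$, hence angular radius less than $\pi/4$; since both $\hat{x}$ and $m(C(x))$ lie in $C(x)$, the angle between them is less than $\pi/4$, which gives the bound on $c$. For $\epsilon$ small enough this makes the parenthesized quantity uniformly negative, and together with the $O(\|x\|^{-3})$ remainder we conclude $\econd{f(X_{n+1}) - f(X_n)}{X_n = x} \leq 0$ whenever $\|x\| \geq R_0$ for some $R_0 > 0$.

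To finish, replace $f$ by $\bar{f}(x) = \log\max(\|x\|,1) \geq 0$. For $\|x\| > R_0$, define $\tau = \inf\{n \geq 1 : \|X_n\| \leq R_0\}$ and $\sigma_R = \inf\{n : \|X_n\| \geq R\}$. Optional stopping applied to the bounded supermartingale $\bar{f}(X_{n \wedge \tau \wedge \sigma_R})$ gives $\prstart{\sigma_R < \tau}{x} \leq \log\|x\|/\log R \to 0$ as $R \to \infty$, so $\prstart{\tau < \infty}{x} = 1$. Iterating via the strong Markov property produces infinitely many returns to $\{\|y\| \leq R_0\}$ almost surely, which is exactly the compact-set recurrence claimed. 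The main obstacle is the geometric estimate on $c$; everything else is a routine Lamperti-type computation, essentially dual to the proof of Lemma~\ref{lem:taylorexpansion}.
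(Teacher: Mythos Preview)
Your proposal is correct and follows essentially the same route as the paper: the Lyapunov function $\log\|x\|$, a second-order Taylor expansion with the third-order remainder controlled by the bounded jumps, and the same geometric input that $\hat x$ makes an angle strictly less than $\pi/4$ with $m(C(x))$, so that $c>1/\sqrt{2}$ uniformly. Your coordinate-free packaging of the second-order term as $\tfrac{1}{2}\tr(M_x)-\langle M_x\hat x,\hat x\rangle$ is exactly the paper's identity~\eqref{eq:second} rewritten. The only point worth tightening is the closing step: to go from $\prstart{\sigma_R<\tau}{x}\to 0$ to $\prstart{\tau<\infty}{x}=1$ you implicitly need $\sigma_R<\infty$ a.s., i.e.\ $\limsup_n\|X_n\|=\infty$ a.s.; the paper states this explicitly (same argument as~\eqref{eq:limsupas}) and then uses supermartingale convergence rather than optional stopping, but the two endings are interchangeable once that fact is recorded.
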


\begin{remark}\rm{
It can be shown that the ratio of the area of the unit sphere to the area of a cap as defined above with angle $\pi/2$ is equal to $2/I_{1/2}\left(\frac{d-1}{2},\frac{1}{2}\right)$, where $I$ is the regularized incomplete beta function. It is then elementary to obtain that the last quantity can be bounded below by $2^{d/2+1} >d$, so that in the above theorem at least $2^{d/2+1}$ measures are needed.
}
\end{remark}

\begin{proof}[{\bf Proof of Theorem~\ref{thm:moremeasures}}]
We define $\phi(x) = \log \|x\|$, for $x \in \R^d$. Then by Taylor expansion to second order terms we obtain for some $\eta \in (0,1)$
\[
\phi(x+Z) = \phi(x) + \langle \nabla \phi(x), Z \rangle + \frac{1}{2} \sum_{i,j}  \frac{\partial^2 \phi(x)}{\partial x_i \partial x_j}
 Z_i Z_j +
\frac{1}{3!} \sum_{i,j,k=1}^{d} \frac{\partial^3 \phi(x+\eta {Z})}{\partial x_i \partial x_j \partial x_k} {Z}_i {Z}_j {Z}_k.
\]
For each $i$ and positive constants $C,C_1$, since $Z$ is bounded, we have
\[
\frac{\partial \phi}{\partial x_i} = \frac{x_i}{\|x\|^2}, \quad 
\frac{\partial^2 \phi}{\partial x_i^2} = \frac{\sum_{j\neq i}x_j^2 - x_i^2}{\|x\|^4} \ \text{ and } \ \max_{i,j,k}\left|\frac{\partial^3 \phi(x+\eta {Z})}{\partial x_i \partial x_j \partial x_k} \right| \leq \frac{C_1}{\|x+\eta Z\|^{3}} \leq \frac{C}{\|x\|^{3}}.
\]
Let $u_1, \ldots, u_d$ be the vectors (basis of $\R^d$) as defined in the theorem. We now write both $x$ and $Z$ in this basis, i.e. we have that $x = \sum_{i=1}^{d}x_i u_i$ and $Z= \sum_{i=1}^{d}Z_i u_i$. Then for $i \neq j$, by independence, we get that $\E{Z_i Z_j} = 0$, while $\E{Z_1^2} = 1$ and for all $i>1$ we have that $\E{Z_i^2} = \epsilon$. Hence, putting all things together we obtain that 
\begin{align}\label{eq:second}
\sum_{i,j} \frac{\partial^2 \phi(x)}{\partial x_i \partial x_j}
\E{Z_i Z_j} =  \frac{(1+\epsilon(d-3))\|x\|^2 + 2(\epsilon-1)x_1^2}{\|x\|^4}.
\end{align}
For the first coordinate $x_1$ of $x$, when decomposed in the basis described above, we have that 
\[
x_1 = \|x\| \cos \theta,
\]
where $\theta$ is strictly smaller than $\pi/4$, 
so there exists $\delta>0$ so that 
$\cos \theta \geq (1+\delta)\sqrt{2}/2$. 
Hence, we can now bound \eqref{eq:second} from above by 
\[
\frac{x_1^2}{\norm{x}^4}\left(2(\epsilon-1) + \frac{1}{2(1+\delta)^2} (1+  \epsilon (d-3))\right),
\]
which can be made negative by choosing $\epsilon$ small enough. 
Notice that in absolute value the last expression is at least $c \norm{x}^{-2}$ for a positive constant $c$, and hence since $Z$ has mean $0$, it follows that for $\norm{x}$ large enough we have
\begin{align*}
 \left|\frac{1}{3!} \sum_{i,j,k=1}^{d} \E{\frac{\partial^3 \phi(x+\eta {Z})}{\partial x_i \partial x_j \partial x_k} {Z}_i {Z}_j {Z}_k} \right| 
\leq  \frac{1}{2} \left|\sum_{i,j=1}^{d} \frac{\partial^2 \phi(x)}{\partial x_i \partial x_j} \E{{Z}_i {Z}_j }  \right|.
\end{align*}
Therefore we deduce that for $\norm{x} \geq r_0$
\begin{align}\label{eq:goalproved}
\E{\phi(x+Z) - \phi(x)} \leq 0.
\end{align}
We now show that this implies recurrence. 
By the same argument used to show~\eqref{eq:limsupas} we get that a.s.
\[
\limsup_{t\to \infty} \norm{X_t} = \infty.
\]
Let $T_{r_0} = \inf\{t\geq 0: X_t \in \B(0,r_0) \}$.
By~\eqref{eq:goalproved} we obtain that $\phi(X_{t\wedge T_{r_0}})$ is a positive super-martingale.  Hence the a.s.\ martingale convergence theorem gives that $\lim_{t\to \infty} \phi(X_{t\wedge T_{r_0}})=Y$ exists a.s.\ and is finite. 
If $T_{r_0}=\infty$ with positive probability, then since $\phi(x) \to \infty$ as $x \to \infty$, then
$\lim \phi(X_{t\wedge T_{r_0}}) = \infty$ with positive probability, which is a contradiction. Therefore, $T_{r_0} <\infty$ a.s.
\end{proof}

We will now give the proof of Theorem~\ref{thm:ddimrecurrence}.

\begin{proof}[{\bf Proof of Theorem~\ref{thm:ddimrecurrence}}]
By~\cite[Theorem 2.2.1]{FayMalMensh} or analogously to the last part of the proof of Theorem~\ref{thm:moremeasures},
to prove recurrence it is enough to find
a nonnegative function~$f$ such that $f(x)\to\infty$ as $x\to\infty$,
and
\begin{equation}
\label{supermart}
\econd{f(X_{n+1})-f(X_n)}{X_n=x} \leq 0 \qquad \text{for all 
large enough $x$.}
\end{equation}

Before presenting the explicit construction of such a function,
let us informally explain the intuition behind this construction.
First of all, a straightforward computation shows that, if~$Y$ is a simple random walk in~$\Z^d$, then
\begin{align*}
 \mathbb{E}[\|Y_{n+1}\|-\|Y_n\| \mid Y_n=x] &=
\frac{d-1}{2d}\frac{1}{\|x\|} + O(\|x\|^{-2}),\\
\mathbb{E}[(\|Y_{n+1}\|-\|Y_n\|)^2 \mid Y_n=x] &=\frac{1}{d} + O(\|x\|^{-1}) .
\end{align*}
One can observe that the ratio of the drift to the second moment
behaves as $\frac{d-1}{2\|x\|}$; combined with the well-known fact
that the SRW is recurrent for $d=2$ and transient for $d\geq 3$,
this suggests that, to obtain recurrence, the constant in this ratio
should not be too large (in fact, at most~$\frac{1}{2}$).
Then, the second moment depends essentially on the dimension, and 
thus it is crucial to look at the drift. So,
consider a (smooth in $\R^d\setminus\{0\}$) function $g(x)=\Theta(\|x\|)$; we shall try to 
 figure out how the level sets of~$g$ should be so that 
the ``drift outside'' with respect to~$g$ ``behaves well'' (i.e.,
the drift multiplied by $\|x\|$
is uniformly bounded above by a not-so-large constant). For that,
let us look at Figure~\ref{f_level_drift}: level sets of~$g$
are indicated by solid lines, vectors' sizes correspond to transition 
probabilities.
\begin{figure}
 \centering
\includegraphics{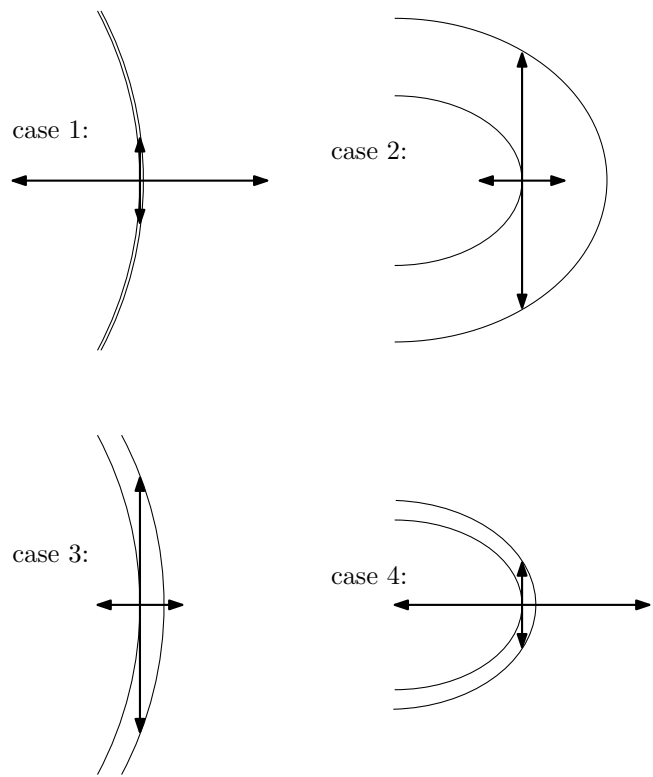}
\caption{Looking at the level sets: how large is the drift? We have \emph{very small} drift in case 1, \emph{very large} drift in case 2, and \emph{moderate}
drifts in cases 3 and 4.}
\label{f_level_drift}
\end{figure}
Then, it is intuitively clear that the case of ``moderate'' drift 
corresponds to the following:
\begin{itemize}
 \item  the ``preferred'' direction is radial, the curvature of level lines
is large, or
 \item  the ``preferred'' direction is transversal and the curvature 
of level lines is small;
\end{itemize}
also, it is clear that ``very flat'' level lines always generate small drift.
However, one cannot hope to make the level lines very flat everywhere,
as they should go around the origin. So, the idea is to find in which
places one can afford ``more curved'' level lines.
 
Observe that, for the random walk we are considering now,
the preferred direction near the axes is the radial one, while
in the ``diagonal'' regions it is in some intermediate position between transversal and radial. This indicates that 
the level sets of the Lyapunov function should look as depicted
on Figure~\ref{f_constr_f}: more curved near the axes, and more flat off the axes.
\begin{figure}
 \centering
\includegraphics{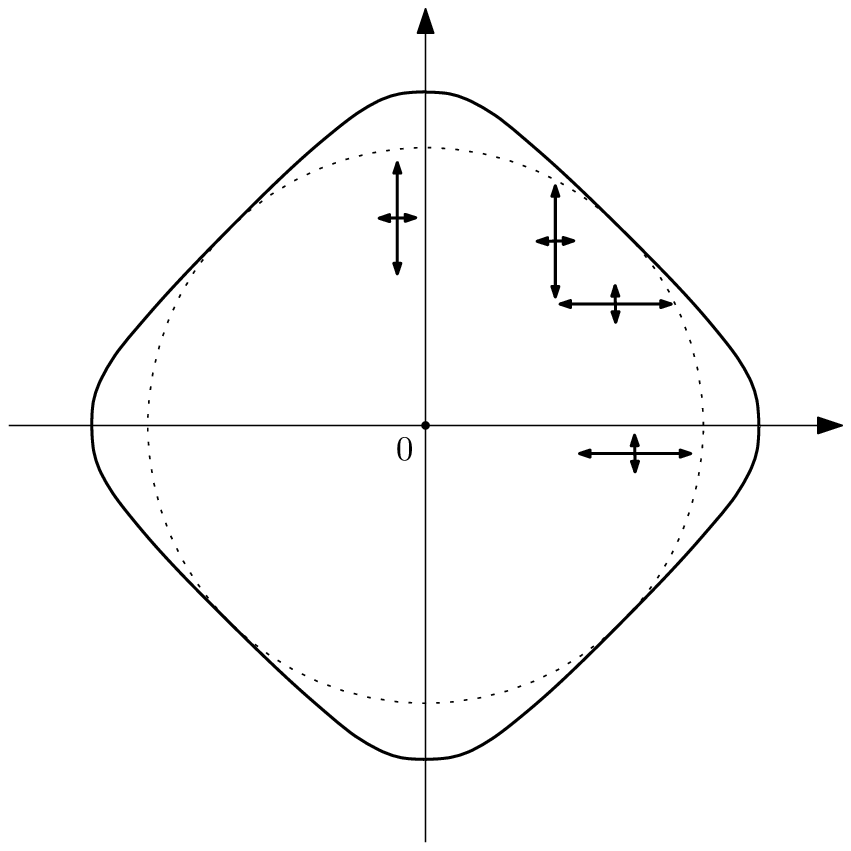}
\caption{How the level sets of~$f$ should look like?}
\label{f_constr_f}
\end{figure}

We are going to use the Lyapunov function
\[
 f(x) = \phi\Big(\frac{x}{\|x\|}\Big)\|x\|^\alpha,
\]
where $\alpha$ is a positive constant and 
$\phi:\S^{d-1}\mapsto\R$ is a positive continuous function,
symmetric in the sense that for any
$(u_0,\ldots,u_{d-1})\in\S^{d-1}$ we have
$\phi(u_0,\ldots,u_{d-1})
=\phi(\tau_0 u_{\sigma(0)},\ldots,\tau_{d-1} u_{\sigma(d-1)})$
for any permutation~$\sigma$ and any $\tau\in\{-1,1\}^d$. 
By the previous discussion, to have the level sets as
on Figure~\ref{f_constr_f}, we are aiming at constructing~$\phi$
with values close to~$1$ near the ``diagonals'' and less than~$1$
near the axes.

By symmetry, it is enough to define the function~$\phi$ 
for $u\in \S^{d-1}$ such that $u_0\geq u_{1,\ldots,d-1}\geq 0$
(clearly, it then holds that $u_0>0$), and, again by symmetry,
 it is enough to prove~\eqref{supermart} for all large
enough $x\in\Z^d$ of the same kind. 
For such $u\in \S^{d-1}$ abbreviate $s_j=u_j/u_0$, $j=1,\ldots,d-1$; 
observe that, if 
$u=x/\|x\|$, then $s_j=x_j/x_0$. We are going to look for 
the function
(for $u$ as above) $\phi(u)=1-\alpha\psi(s_1,\ldots,s_{d-1})$, 
where~$\psi$
is a function with continuous third partial derivatives 
on $[0,1]^{d-1}$ (in fact, it will become clear that the function~$\psi$
extended by means of symmetry on $[-1,1]^d$ has continuous third derivatives 
on $[-1,1]^d$; this will imply that $o$-s in the computations below are uniform).

Next, we proceed in the following way: we do calculations
in order to figure out, which conditions the function~$\psi$
should satisfy in order to guarantee that~\eqref{supermart} holds,
and then try to construct a concrete example of~$\psi$ that
satisfies these conditions.

First of all, a straightforward calculation shows that for 
any $e\in\Z^d$ with $\|e\|=1$ we have
\begin{equation}
\label{norm_Taylor}
 \|x+e\|^\alpha = \|x\|^\alpha \Big(1+\alpha\frac{\langle x,e\rangle}
{\|x\|^2}+\frac{\alpha}{2\|x\|^2}-\frac{1}{2}\alpha(2-\alpha)
\frac{\langle x,e \rangle^2}{\|x\|^2}\cdot\frac{1}{\|x\|^2}+o(\|x\|^{-2})\Big), 
\end{equation}
as $x\to \infty$.

In the computations below, we will use the abbreviations
\begin{align*}
 \psi'_j &:= \frac{\partial\psi(s_1,\ldots,s_{d-1})}{\partial s_j}, 
   \quad j=1,\ldots,d-1,\\
 \psi''_{ij} &:= \frac{\partial^2\psi(s_1,\ldots,s_{d-1})}
 {\partial s_i\partial s_j}, 
   \quad i,j=1,\ldots,d-1.
\end{align*}
 
Let us now consider $x\in \Z^d$. 
From now on we will refer to the situation when $x_0>x_{1,\ldots,d-1}\geq 0$ as the ``non-boundary case" and $x_0=x_1=\cdots=x_m>x_{m+1}\geq\ldots \geq x_{d-1}\geq 0$ for
 some $m\geq 1$ as the ``boundary case".
Observe for the boundary case the corresponding~$s$
will be of the form 
$s=(1,\ldots,(1)_m, s_{m+1},\ldots,s_{d-1})$; here and in 
the sequel we indicate the position of the symbol in a row by
placing parentheses and putting a subscript. Also, in the situation
when only one coordinate of the vector~$s$ changes, we use the
notation of the form $\psi((\tilde{s})_j)$ for
$\psi(s_1,\ldots,s_{j-1},\tilde{s},s_{j+1},\ldots,s_{d-1})$,
possibly omitting the parentheses and the subscript when the position
is clear. 

First we deal with the non-boundary case.
\newline
Let us consider $x\in\Z^d$ such that $x_0>x_{1,\ldots,d-1}\geq 0$. 
Again using~\eqref{norm_Taylor} and observing that (recall
$s_j=x_j/x_0$) 
$\frac{x_j}{x_0-1}=s_j(1+x_0^{-1}+x_0^{-2}+o(\|x\|^{-2}))$
and $\frac{x_j}{x_0+1}=s_j(1-x_0^{-1}+x_0^{-2}+o(\|x\|^{-2}))$, we write
\begin{align}
\lefteqn{\econd{f(X_{n+1})-f(X_n)}{X_n=x}}\nonumber\\
&= -(1-\alpha\psi(s))\|x\|^\alpha + \frac{\gamma}{2(\gamma+d-1)}
 \Bigg[\Big(1-\alpha\psi\big(\fract{x_1}{x_0-1},\ldots,\fract{x_{d-1}}{x_0-1}\big)\Big)
\|x-e_0\|^\alpha  \nonumber\\
& \qquad\qquad\qquad \qquad \qquad 
+ \Big(1-\alpha\psi\big(\fract{x_1}{x_0+1},\ldots,\fract{x_{d-1}}{x_0+1}\big)\Big)
\|x+e_0\|^\alpha \Bigg]\nonumber\\
&\quad  +\frac{1}{2(\gamma+d-1)}\sum_{j=1}^{d-1} \Bigg[
\Big(1-\alpha\psi\big(\fract{x_j-1}{x_0}\big)\Big)\|x-e_j\|^\alpha
+ \Big(1-\alpha\psi\big(\fract{x_j+1}{x_0}\big)\Big)\|x+e_j\|^\alpha\Bigg]
\nonumber\\
&= \|x\|^\alpha \Bigg\{\frac{\gamma}{2(\gamma+d-1)}\Bigg[\Big(1-\alpha\psi(s)
-\alpha\sum_{j=1}^{d-1}\big(\frac{s_j}{x_0}+\frac{s_j}{x_0^2}\big)\psi'_j
-\frac{\alpha}{2}\sum_{i,j=1}^{d-1}\frac{s_is_j}{x_0^2}\psi''_{ij}
+ o(\|x\|^{-2})
\Big)\nonumber\\
&\qquad \quad \qquad\qquad \qquad \times
 \Big(1-\alpha\frac{x_0}
{\|x\|^2}+\frac{\alpha}{2\|x\|^2}-\frac{1}{2}\alpha(2-\alpha)
\frac{x_0^2}{\|x\|^2}\cdot\frac{1}{\|x\|^2}+o(\|x\|^{-2})\Big)
\nonumber\\
&\qquad \qquad \qquad + \Big(1-\alpha\psi(s)
-\alpha\sum_{j=1}^{d-1}\big(-\frac{s_j}{x_0}+\frac{s_j}{x_0^2}\big)\psi'_j
-\frac{\alpha}{2}\sum_{i,j=1}^{d-1}\frac{s_is_j}{x_0^2}\psi''_{ij}
+ o(\|x\|^{-2})
\Big)\nonumber\\
&\qquad \qquad \quad\qquad \qquad \times
 \Big(1+\alpha\frac{x_0}
{\|x\|^2}+\frac{\alpha}{2\|x\|^2}-\frac{1}{2}\alpha(2-\alpha)
\frac{x_0^2}{\|x\|^2}\cdot\frac{1}{\|x\|^2}+o(\|x\|^{-2})\Big)
\nonumber\\
&\qquad \qquad \qquad - 2(1-\alpha\psi(s))\Bigg]\nonumber\\
&\quad  +\frac{1}{2(\gamma+d-1)}\sum_{j=1}^{d-1} \Bigg[- 2(1-\alpha\psi(s))+
\Big(1-\alpha\psi(s)+\alpha x_0^{-1}\psi'_j-\frac{\alpha}{2x_0^2}\psi''_{jj}
 \Big)\nonumber\\
&\qquad \quad \quad\qquad \qquad \times
 \Big(1-\alpha\frac{x_j}
{\|x\|^2}+\frac{\alpha}{2\|x\|^2}-\frac{1}{2}\alpha(2-\alpha)
\frac{x_j^2}{\|x\|^2}\cdot\frac{1}{\|x\|^2}+o(\|x\|^{-2})\Big)
\nonumber\\
& \qquad \quad \qquad + \Big(1-\alpha\psi(s)-\alpha x_0^{-1}\psi'_j-\frac{\alpha}{2x_0^2}\psi''_{jj}
 \Big)\nonumber\\
&\qquad \quad \quad\qquad \qquad \times
 \Big(1+\alpha\frac{x_j}
{\|x\|^2}+\frac{\alpha}{2\|x\|^2}-\frac{1}{2}\alpha(2-\alpha)
\frac{x_j^2}{\|x\|^2}\cdot\frac{1}{\|x\|^2}+o(\|x\|^{-2})\Big)
\Bigg] \Bigg\}\nonumber\\
&= \alpha\|x\|^{\alpha} \Bigg\{ \frac{\gamma}{\gamma+d-1}
\Bigg[\frac{1-\alpha\psi(s)}{2\|x\|^2}
-\frac{(2-\alpha)(1-\alpha\psi(s))}{2\|x\|^2}\cdot\frac{x_0^2}{
\|x\|^2 }
-\sum_{j=1}^{d-1}\Big(\frac{s_j}{x_0^2}-
\frac{\alpha s_j}{\|x\|^2} \Big)\psi'_j\nonumber\\
&\qquad \quad \qquad\qquad\qquad\qquad\qquad\qquad\qquad\qquad 
-\frac{1}{2} \sum_{i,j=1}^{d-1}\frac{s_is_j}{x_0^2}\psi''_{ij}
+o(\|x\|^{-2})\Bigg]\nonumber\\
&\qquad \quad \qquad + \frac{1}{\gamma+d-1}
\Bigg[\frac{(d-1)(1-\alpha\psi(s))}{2\|x\|^2}
+\frac{(2-\alpha)(1-\alpha\psi(s))}{2\|x\|^2}\cdot\frac{x_0^2}{
\|x\|^2 } -\frac{(2-\alpha)(1-\alpha\psi(s))}{2\|x\|^2}
\nonumber 
\\ &\qquad \quad \qquad\qquad\qquad\qquad\qquad\qquad\qquad\qquad -\sum_{j=1}^{d-1}\frac{\alpha s_j}{\|x\|^2}\psi'_j
-
\frac{1}{2}\sum_{j=1}^{d-1}\frac{1}{x_0^2}\psi''_{jj}
 +o(\|x\|^{-2})\Bigg]\Bigg\}\nonumber\\
& = -\alpha\|x\|^{\alpha-2}\Phi(x,\psi) + \alpha\|x\|^{\alpha-2}
\big(\gamma^{-1}\Phi_1(x,\psi,\gamma,\alpha)+\alpha\Phi_2(x,\psi,\gamma,
\alpha)\big),
\label{calc_interior}
\end{align}
where $\Phi_1$ and $\Phi_2$ are uniformly bounded 
for large enough~$x$, and 
\begin{equation}
\label{Phi}
 \Phi(x,\psi) = \frac{x_0^2}{\|x\|^2 }-\frac{1}{2}
 +\frac{\|x\|^2}{x_0^2}\Big(\sum_{j=1}^{d-1}s_j\psi'_j
+\frac{1}{2}\sum_{i,j=1}^{d-1}s_is_j\psi''_{ij} \Big).
\end{equation}
The idea is then to prove that, with a suitable choice for~$\psi$,
the quantity $\Phi(x,\psi)$ will be uniformly positive
for all large enough~$x$, and then the
second term in the right-hand side of~\eqref{calc_interior} 
can be controlled by choosing large~$\gamma$ and small~$\alpha$. This
will make~\eqref{calc_interior} negative for all large~$x$.

Now, in order to obtain a simplified form
for~\eqref{Phi}, we pass to the (hyper)spherical
coordinates:
\begin{align*}
s_1&=r\cos\theta_1,\\
s_2&=r\sin\theta_1\cos\theta_2,\\
&\ldots\\
s_{d-2}&=r\sin\theta_1\ldots\sin\theta_{d-3}\cos\theta_{d-2},\\
s_{d-1}&=r\sin\theta_1\ldots\sin\theta_{d-3}\sin\theta_{d-2}.
\end{align*}
Since $\frac{\|x\|^2}{x_0^2}=1+r^2$, and (abbreviating 
$\psi'_r=\frac{\partial\psi}{\partial r}$ 
and $\psi''_{rr}=\frac{\partial^2\psi}{\partial r^2}$ )
\[
 \psi'_r=\frac{1}{r}\sum_{j=1}^{d-1}s_j\psi'_j, \qquad 
 \psi''_{rr}
=\frac{1}{r^2}\sum_{i,j=1}^{d-1}s_is_j\psi''_{ij },
\]
we have
\begin{align}
  \Phi(x,\psi) &= \frac{1}{1+r^2}-\frac{1}{2}
 +(1+r^2)\Big(r\psi'_r+\frac{r^2}{2}\psi''_{rr}\Big)\nonumber\\
 &= \frac{1+r^2}{2}\Big(\frac{1-r^2}{(1+r^2)^2}+
  \big(r^2\psi'_r\big)'_r\Big). \label{Phi_sph_coord}
\end{align}
Now, we define the function~$\psi$ (it will depend on~$r$ only,
not on $\theta_1,\ldots,\theta_{d-2}$) in the following way.
First, clearly, we need to define $\psi(r)$ for
$r\in [0,\sqrt{d-1}]$. Then, observe that
\begin{equation}
\label{int_dr}
 \int_0^{\sqrt{d-1}} \frac{1-r^2}{(1+r^2)^2}\,dr =
\frac{\sqrt{d-1}}{d}>0,
\end{equation}
so, for a suitable (small enough) $\eps_0$ we can construct a
smooth function~$h$ with the following properties 
(on the Cartesian plane with coordinates $(r,y)$, think of going
from the origin along $y=\frac{r^2}{4\eps_0^2}$ until it intersects
with $y=\frac{1-r^2}{(1+r^2)^2}$ and then modify a 
little bit the curve around the intersection point to make it smooth,
see Figure~\ref{f_h_construction}):
\begin{figure}
 \centering
\includegraphics{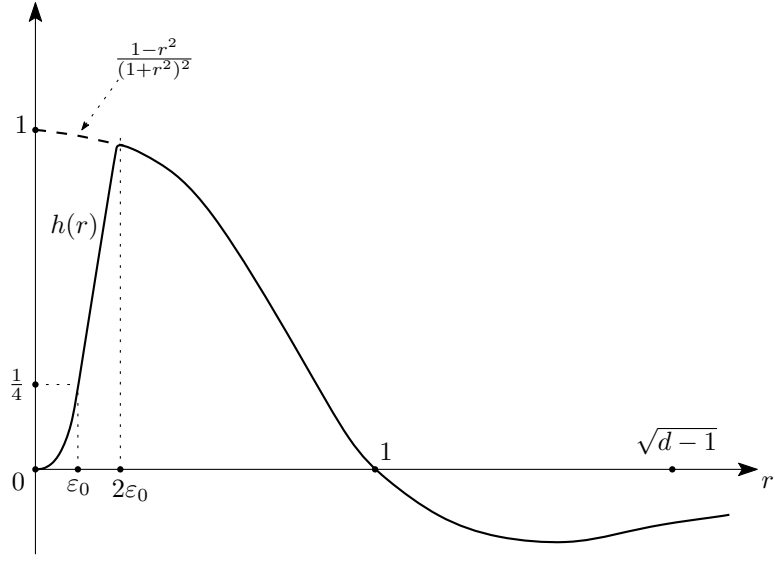}
\caption{On the construction of $h$}
\label{f_h_construction}
\end{figure}
\begin{itemize}
 \item[(i)] $0\leq h(r) \leq \frac{1-r^2}{(1+r^2)^2}$ for all~$r<2\epsilon_0$
 and $h(r)=\frac{1-r^2}{(1+r^2)^2}$ for $r\geq 2\eps_0$;
 \item[(ii)] $h(0)=0$ and $h(r)\sim \frac{r^2}{4\eps_0^2}$ as $r\to 0$;
 \item[(ii)] $\frac{1-r^2}{(1+r^2)^2}-h(r)>\frac{1}{2}$ 
for $r\leq \eps_0$;
 \item[(iv)] $b:= \int_0^{\sqrt{d-1}} h(r)\, dr > 0$ 
 (by~\eqref{int_dr} it holds in fact that~$b\in(0,1)$);
 \item[(v)] $\int_0^rh(u)\,du>\frac{br^3}{3(d-1)^{3/2}}$ 
     for all $r\in (0,\sqrt{d-1}]$. 
\end{itemize}
Denote $H(r)=\int_0^rh(u)\,du$, so that we have 
$H(\sqrt{d-1})=b$. 
Then, define for $r\in [0,\sqrt{d-1}]$
\begin{equation}
\label{df_psi}
 \psi(r) = \int_r^{\sqrt{d-1}}\Big(\frac{H(v)}{v^2}
           -\frac{bv}{3(d-1)^{3/2}}\Big) \, dv.
\end{equation}
For the function~$\psi$ defined in this way, we have 
$r^2\psi'(r)=\frac{br^3}{3(d-1)^{3/2}}-H(r)$,
so $h(r)+(r^2\psi'(r))'=b(d-1)^{-3/2}r^2$. 
By construction, it then holds that
\begin{equation}
\label{positive_Phi}
 \inf_{r\in[0,\sqrt{d-1}]}
\Big(\frac{1-r^2}{(1+r^2)^2}+\big(r^2\psi'(r)\big)'\Big) 
            \geq b(d-1)^{-3/2}\eps_0^2\wedge 
\frac{1}{2},
\end{equation}
and this (recall~\eqref{Phi} and~\eqref{Phi_sph_coord}) shows that, 
if~$\gamma$ is large enough and~$\alpha$ is small enough then 
the right-hand side of~\eqref{calc_interior} is negative for all 
large enough~$x\in\Z^d$.

To complete the proof of the theorem, it remains to deal with the boundary case.
\newline
Let $x_0=x_1=\cdots=x_m>x_{m+1}\geq\ldots \geq x_{d-1}\geq 0$ for
some $m\geq 1$. 
Using~\eqref{norm_Taylor} (up to the term of order $\|x\|^{-1}$ in the parentheses), using the fact that $\phi$ is invariant under permutations and observing that $x_0$ and $\|x\|$ are of the same order, we have
\goodbreak
\begin{align}
\lefteqn{\econd{f(X_{n+1})-f(X_n)}{X_n=x} }\nonumber\\
 &= -(1-\alpha\psi(s))\|x\|^\alpha + \frac{\gamma+m}{2(\gamma+d-1)}
 \Bigg[\Big(1-\alpha\psi\big((\fract{x_0-1}{x_0})_m\big)\Big)
\|x-e_0\|^\alpha  \nonumber\\
& \quad \quad \quad + \Big(1-\alpha\psi\big(\fract{x_0}{x_0+1},\ldots, (\fract{x_0}{x_0+1})_m, \fract{x_{m+1}}{x_0+1}\ldots,\fract{x_{d-1}}{x_0+1}\big)\Big)
\|x+e_0\|^\alpha \Bigg]\nonumber\\
&\quad + \frac{1}{2(\gamma+d-1)}\sum_{j=m+1}^{d-1} \Bigg[\Big(1-\alpha\psi\big(\fract{x_j-1}{x_0}\big)\Big)
\|x-e_j\|^\alpha + \Big(1-\alpha\psi\big(\fract{x_j+1}{x_0}\big)\Big)
\|x+e_j\|^\alpha\Bigg]\nonumber\\
&= \|x\|^\alpha \Bigg\{\frac{\gamma+m}{2(\gamma+d-1)}\Bigg[
\Big(1-\alpha\Big(\psi(s)-\frac{\psi'_m}{x_0} 
+o(\|x\|^{-1})\Big)\Big) \Big(1-\alpha\frac{x_0}{\|x\|^2}+o(\|x\|^{-1})\Big)
\nonumber\\
& \qquad+\Big(1-\alpha\Big(\psi(s)-\sum_{k=1}^m\frac{\psi'_k}{x_0}- \sum_{k=m+1}^{d-1}\frac{s_k\psi'_k}{x_0}+o(\|x\|^{-1})\Big)\Big)
\Big(1+\alpha\frac{x_0}{\|x\|^2}+o(\|x\|^{-1})\Big) \nonumber\\
 & \qquad -2 (1-\alpha\psi(s))  \Bigg] \nonumber\\
&\quad + \frac{1}{2(\gamma+d-1)}\sum_{j=m+1}^{d-1}\Bigg[\Big(1-\alpha
\Big(\psi(s)-\frac{\psi'_j}{x_0} 
+o(\|x\|^{-1})\Big)\Big) \Big(1-\alpha\frac{x_j}{\|x\|^2}+o(\|x\|^{-1})\Big)
\nonumber\\
&\qquad \Big(1-\alpha\Big(\psi(s)+\frac{\psi'_j}{x_0} 
+o(\|x\|^{-1})\Big)\Big) \Big(1+\alpha\frac{x_j}{\|x\|^2}+o(\|x\|^{-1})\Big)
-2(1-\alpha\psi(s))\Bigg]
\Bigg\} \nonumber\\
&=\alpha\|x\|^\alpha\frac{\gamma+m}{2(\gamma+d-1)} \Bigg[\frac{1}{x_0}
\Big(\sum_{k=1}^{m-1}\psi'_k+2\psi'_m+\sum_{k=m+1}^{d-1}s_k\psi'_k \Big)
+o(\|x\|^{-1})\Bigg]
\label{calc_boundary}
\end{align}
(observe that in the above calculation all the terms of order 
$\|x\|^{\alpha-1}$ that correspond to the choice of coordinates 
$m+1,\ldots,d-1$ of~$x$, cancel).

Now simply note that 
by the property~(v), we have $\psi'(r)<0$ 
for all $r\in (0,\sqrt{d-1}]$.
Observe also that for some positive
constant~$\delta_0$ it holds that $\psi'(r) \leq -\delta_0$ 
for all $r\in [1,\sqrt{d-1}]$. Then (recall that in the boundary case
$s_1=1$ and $s_j\geq0$ for all $j=2,\ldots,d-1$) we have

\[
\psi'_j=\frac{s_j}{r}\psi'_r\leq 0 \text{ for all }j=1,\ldots,d-1 
\quad\text{ and }\quad
 \psi'_1(s) \leq -\frac{\delta_0}{\sqrt{d-1}}.
\]
This implies that the right-hand side of~\eqref{calc_boundary} is negative
for all large enough~$x\in\Z^d$ and thus concludes the proof of Theorem~\ref{thm:ddimrecurrence}.
\end{proof}

\section*{A conjecture}

We end this paper with an open question:

\begin{conjecture}
Let $\mu_1,\ldots,\mu_{d-1}$ be $d$-dimensional measures in $\R^d$, $d\geq 4$, with $0$ mean and $2+\beta$ moments, for some $\beta>0$, and $\ell$ an arbitrary adapted rule. Then the walk $X$ generated by these measures and the rule $\ell$ is transient.
\end{conjecture}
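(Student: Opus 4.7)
The plan is to extend Proposition~\ref{pro:tracediagonal} to the non-commuting setting by producing an invertible $d\times d$ matrix $A$ that satisfies the trace condition~\eqref{eq:tracecondition} for all $i\leq d-1$; Theorem~\ref{thm:kmeasddim} will then yield transience. Define
\[
\Psi(A)=\max_{1\leq j\leq d-1}\frac{\lambda_{\max}(AM_jA^T)}{\tr(AM_jA^T)}\qquad\text{for }A\neq 0.
\]
Because each $M_j$ is positive definite, the denominator is bounded away from zero on $\{A:\|A\|_F=1\}$, so $\Psi$ is continuous on this compact set and attains a minimum at some $\tilde{A}$. The goal is to show $\Psi(\tilde{A})<\tfrac{1}{2}$.

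Granting for the moment that $\tilde{A}$ is invertible, set $\tilde{M}_j=\tilde{A}M_j\tilde{A}^T$ (positive definite) and $J=\{j\leq d-1:\lambda_{\max}(\tilde{M}_j)/\tr(\tilde{M}_j)=\Psi(\tilde{A})\}$, and mirror the case split of Proposition~\ref{pro:tracediagonal}. If some $j\in J$ has top eigenvalue of multiplicity $\geq 2$, then since $\tilde{M}_j$ has $d\geq 4$ strictly positive eigenvalues one gets $\tr(\tilde{M}_j)>2\lambda_{\max}(\tilde{M}_j)$, i.e.\ $\Psi(\tilde{A})<\tfrac{1}{2}$, at once. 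Otherwise each $j\in J$ has a simple top eigenvalue $\lambda_j$ with unit eigenvector $v_j$, and since $|J|\leq d-1<d$ there is a unit vector $w\in\R^d$ with $w\perp v_j$ for every $j\in J$. Consider the left perturbation $A_\epsilon=(I+\epsilon ww^T)\tilde{A}$. Because $w\in v_j^\perp$, both $\tilde{M}_j$ (by symmetry) and $I+\epsilon ww^T$ preserve the decomposition $\R^d=\mathrm{span}(v_j)\oplus v_j^\perp$; hence $v_j$ remains an eigenvector of $A_\epsilon M_jA_\epsilon^T$ with eigenvalue $\lambda_j$, and on $v_j^\perp$ the operator norm is at most $(1+\epsilon)^2\gamma_j$, where $\gamma_j<\lambda_j$ is the second eigenvalue of $\tilde{M}_j$. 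For small $\epsilon$ the top eigenvalue is therefore unchanged, while
\[
\tr(A_\epsilon M_j A_\epsilon^T)=\tr(\tilde{M}_j)+(2\epsilon+\epsilon^2)\,w^T\tilde{M}_j w
\]
is strictly larger by positive definiteness, so the ratio strictly decreases for each $j\in J$; for $i\notin J$ continuity keeps the ratio below $\Psi(\tilde{A})$. This gives $\Psi(A_\epsilon)<\Psi(\tilde{A})$, contradicting minimality. Hence only the first case can occur, $\Psi(\tilde{A})<\tfrac{1}{2}$, and a small further perturbation produces an invertible matrix with the same strict bound.

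The principal obstacle is the invertibility of the minimiser, which is essential both for having $\tilde{M}_j$ positive definite (so that $w^T\tilde{M}_j w>0$) and for invoking Theorem~\ref{thm:kmeasddim}. Claim~\ref{cl:continuity} only establishes this for diagonal matrices and uses the simultaneous diagonal structure in an essential way. A natural route is a regularisation: for $\delta>0$ minimise $\Psi$ over $\{A:\|A\|_F=1,\ \sigma_{\min}(A)\geq\delta\}$, apply the perturbation argument to the interior minimiser $\tilde{A}^{(\delta)}$ to obtain $\Psi(\tilde{A}^{(\delta)})<\tfrac{1}{2}$, and then pass to the limit $\delta\downarrow 0$ using a quantitative version of the inequality. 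Alternatively, one can try to perturb a singular minimiser $\tilde{A}$ directly by a rank-one correction $\tilde{A}+\epsilon vu^T$ with $u\in\ker\tilde{A}$ of unit norm and $v\perp\mathrm{range}(\tilde{A})$; a second-order expansion then shows that the top eigenvalue of $\tilde{A}M_j\tilde{A}^T$ increases by $\epsilon^2(u^T M_j\tilde{A}^T v_j)^2/\lambda_j$ while the trace increases by $\epsilon^2\,u^T M_j u$, and a Cauchy--Schwarz bound gives ratio at most~$1$. Sharpening this to strictly less than $\Psi(\tilde{A})$, by exploiting the slack $|J|\leq d-1$ to choose $u$ orthogonal to the directions $M_j\tilde{A}v_j$ for $j\in J$, or by combining several rank-one moves, is the principal technical difficulty that remains to be resolved.
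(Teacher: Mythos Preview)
The statement you are attempting to prove is listed in the paper as an open \emph{conjecture}; the authors explicitly say they can only establish it when the covariance matrices are jointly diagonalizable (Proposition~\ref{pro:tracediagonal}). There is therefore no proof in the paper to compare against, and your proposal does not constitute a proof either --- as you yourself note at the end.

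Your outline correctly extends the two--case argument of Proposition~\ref{pro:tracediagonal} to arbitrary (not necessarily diagonal) matrices, and the perturbation $(I+\epsilon ww^T)\tilde A$ in Case~2 is computed correctly: if $\tilde A$ is invertible, the decomposition $\mathrm{span}(v_j)\oplus v_j^\perp$ is indeed preserved, the top eigenvalue stays fixed, and the trace strictly increases because $w^T\tilde M_j w>0$. So the obstacle really is the invertibility of the minimiser, and this obstacle is genuine: in the diagonal case the paper's Claim~\ref{cl:continuity} exploits that setting a zero diagonal entry to a small $\epsilon$ adds a new eigenvalue $\epsilon a^i_d$ without disturbing the others, which has no analogue when the $M_j$ do not commute.

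Neither of your proposed fixes closes the gap. For the regularisation, you assume an ``interior minimiser'' on $\{\|A\|_F=1,\ \sigma_{\min}(A)\ge\delta\}$, but nothing prevents the minimiser from sitting on the boundary $\sigma_{\min}=\delta$; the perturbed matrix $(I+\epsilon ww^T)\tilde A^{(\delta)}$ does keep $\sigma_{\min}\ge\delta$, but its Frobenius norm exceeds~$1$, and after renormalising you may drop below the $\sigma_{\min}$ threshold, so you cannot contradict minimality over the constraint set. For the rank--one correction $\tilde A+\epsilon vu^T$ with $u\in\ker\tilde A$, your own second--order computation shows the eigenvalue increases by $\epsilon^2(u^TM_j\tilde A^Tv_j)^2/\lambda_j$ and the trace by $\epsilon^2u^TM_ju$; to make the ratio decrease you need $(u^TM_j\tilde A^Tv_j)^2 < \Psi(\tilde A)\,\lambda_j\,u^TM_ju$, and Cauchy--Schwarz only gives $\le\lambda_j\,u^TM_ju$. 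Your suggestion to pick $u\in\ker\tilde A$ orthogonal to all $M_j\tilde A^Tv_j$, $j\in J$, requires $\dim\ker\tilde A>|J|$, which need not hold (for instance if $\mathrm{rank}\,\tilde A=d-1$ and $|J|\ge 1$). In short, you have correctly located the difficulty that keeps this a conjecture, but you have not resolved it.
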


To answer this question, by Theorem~\ref{thm:kmeasddim} it suffices to prove the existence of a matrix $A$ satisfying the trace condition~\eqref{eq:tracecondition}. 
So far, we were able to prove it in the case when the $d-1$~covariance matrices are jointly diagonalizable.

\section*{Acknowledgements} 
We thank Itai Benjamini for asking the question that led to this work and 
 the organizers of the XV~Brazilian Probability School where this collaboration was initiated. We also thank Ronen Eldan and Miklos Racz for helpful discussions.
 The last two authors thank Microsoft Research, Redmond, and MSRI, Berkeley, where this work was completed, for their hospitality.
The work of Serguei Popov was partially
supported by CNPq (300328/2005--2) and FAPESP (2009/52379--8).

\bibliographystyle{plain}
\bibliography{biblio}

\begin{thebibliography}{10}

\bibitem{Benaim}
Michel Bena{\"{\i}}m.
\newblock Vertex-reinforced random walks and a conjecture of {P}emantle.
\newblock {\em Ann. Probab.}, 25(1):361--392, 1997.

\bibitem{BenKozScha}
Itai Benjamini, Gady Kozma, and Bruno Schapira.
\newblock A balanced excited random walk.
\newblock {\em C. R. Math. Acad. Sci. Paris}, 349(7-8):459--462, 2011.

\bibitem{BenWilson}
Itai Benjamini and David~B. Wilson.
\newblock Excited random walk.
\newblock {\em Electron. Comm. Probab.}, 8:86--92 (electronic), 2003.

\bibitem{BerRam}
Jean B{\'e}rard and Alejandro Ram{\'{\i}}rez.
\newblock Central limit theorem for the excited random walk in dimension
  {$D\geq 2$}.
\newblock {\em Electron. Comm. Probab.}, 12:303--314 (electronic), 2007.

\bibitem{Esseen}
C.~G. Esseen.
\newblock On the concentration function of a sum of independent random
  variables.
\newblock {\em Z. Wahrscheinlichkeitstheorie und Verw. Gebiete}, 9:290--308,
  1968.

\bibitem{FayMalMensh}
G.~Fayolle, V.~A. Malyshev, and M.~V. Menshikov.
\newblock {\em Topics in the constructive theory of countable {M}arkov chains}.
\newblock Cambridge University Press, Cambridge, 1995.

\bibitem{HornJohnson}
Roger~A. Horn and Charles~R. Johnson.
\newblock {\em Matrix analysis}.
\newblock Cambridge University Press, Cambridge, 1990.
\newblock Corrected reprint of the 1985 original.

\bibitem{MenPopRamVach}
M.~{Menshikov}, S.~{Popov}, A.~{Ramirez}, and M.~{Vachkovskaia}.
\newblock {On a general many-dimensional excited random walk}.
\newblock {\em ArXiv e-prints}.
\newblock {To appear in: \textit{Ann. Probab.}}

\bibitem{MerkRolSil}
Franz Merkl and Silke W.~W. Rolles.
\newblock Recurrence of edge-reinforced random walk on a two-dimensional graph.
\newblock {\em Ann. Probab.}, 37(5):1679--1714, 2009.

\bibitem{PemVolkov}
Robin Pemantle and Stanislav Volkov.
\newblock Vertex-reinforced random walk on {${\bf Z}$} has finite range.
\newblock {\em Ann. Probab.}, 27(3):1368--1388, 1999.

\bibitem{RaimSchap}
Olivier Raimond and Bruno Schapira.
\newblock On some generalized reinforced random walk on integers.
\newblock {\em Electron. J. Probab.}, 14:no. 60, 1770--1789, 2009.

\bibitem{HofHol}
Remco van~der Hofstad and Mark Holmes.
\newblock Monotonicity for excited random walk in high dimensions.
\newblock {\em Probab. Theory Related Fields}, 147(1-2):333--348, 2010.

\bibitem{Volkov}
Stanislav Volkov.
\newblock Vertex-reinforced random walk on arbitrary graphs.
\newblock {\em Ann. Probab.}, 29(1):66--91, 2001.

\bibitem{Zerner}
Martin P.~W. Zerner.
\newblock Recurrence and transience of excited random walks on {$\mathbb{Z}^d$}
  and strips.
\newblock {\em Electron. Comm. Probab.}, 11:118--128 (electronic), 2006.

\end{thebibliography}

\end{document}